\newtheorem{theorem}{Theorem}
\theoremstyle{definition}
\newtheorem{definition}[theorem]{Definition}
\theoremstyle{remark}
\numberwithin{equation}{section}
\theoremstyle{plain}
\newtheorem{corollary}[theorem]{Corollary}
\newtheorem{proposition}[theorem]{Proposition}
 \author{Jason Cantarella, Allison Henrich, Elsa Magness, Oliver O'Keefe, Kayla Perez,\\
  Eric J. Rawdon and Briana Zimmer}
 \title{Knot Fertility and Lineage}
\begin{document}
 
% --------------------------------------------------------------
%                         Start here
% --------------------------------------------------------------

\lhead{Knot Fertility and Lineage}
 
\maketitle

\begin{abstract} In this paper, we introduce a new type of relation between knots called the descendant relation. One knot $H$ is a {\em descendant} of another knot $K$ if $H$ can be obtained from a minimal crossing diagram of $K$ by some number of crossing changes. We explore properties of the descendant relation and study how certain knots are related, paying particular attention to those knots, called {\em fertile knots}, that have a large number of descendants. Furthermore, we provide computational data related to various notions of knot fertility and propose several open questions for future exploration.
\end{abstract}

\section{Introduction}

The $7_6$ knot, pictured in Figure~\ref{7-6}, is a particularly interesting knot. This is because, in a certain sense, all smaller knots are contained in this knot. 

\begin{figure}[!htbp] 
\begin{center}
\includegraphics[height=1.3in]{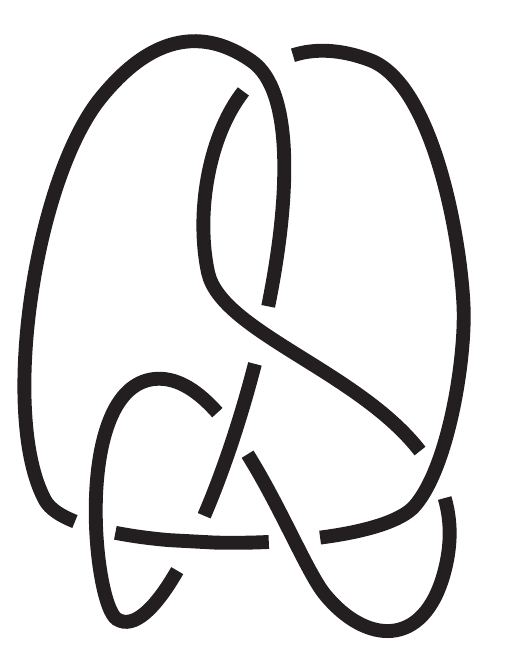} 
\end{center}
\vspace{-.3in}
\caption{A minimal crossing diagram of the $7_6$ knot.}\label{7-6}
\end{figure}

What do we mean by ``contained" in this context? We are interested in studying when a knot is a {\em parent} of another knot, where parenthood is defined as follows.

\begin{definition} A knot $K$ is a \textbf{parent} of a knot $H$ if a subset of the crossings in a minimal crossing diagram of $K$ can be changed to produce a diagram of $H$. In this case, we say that $H$ is a \textbf{descendant} of $K$. 
\end{definition}

For instance, knot $11a135$ is a parent of knot $3_1$, the trefoil, as shown in Figure~\ref{parent_ex}. Equivalently, we can say that the trefoil is a descendant of $11a135$. A curious feature of this definition is that any knot $K$ is both its own descendant and its own parent.

\begin{figure}[!htbp] 
\begin{center}
\includegraphics[height=1.7in]{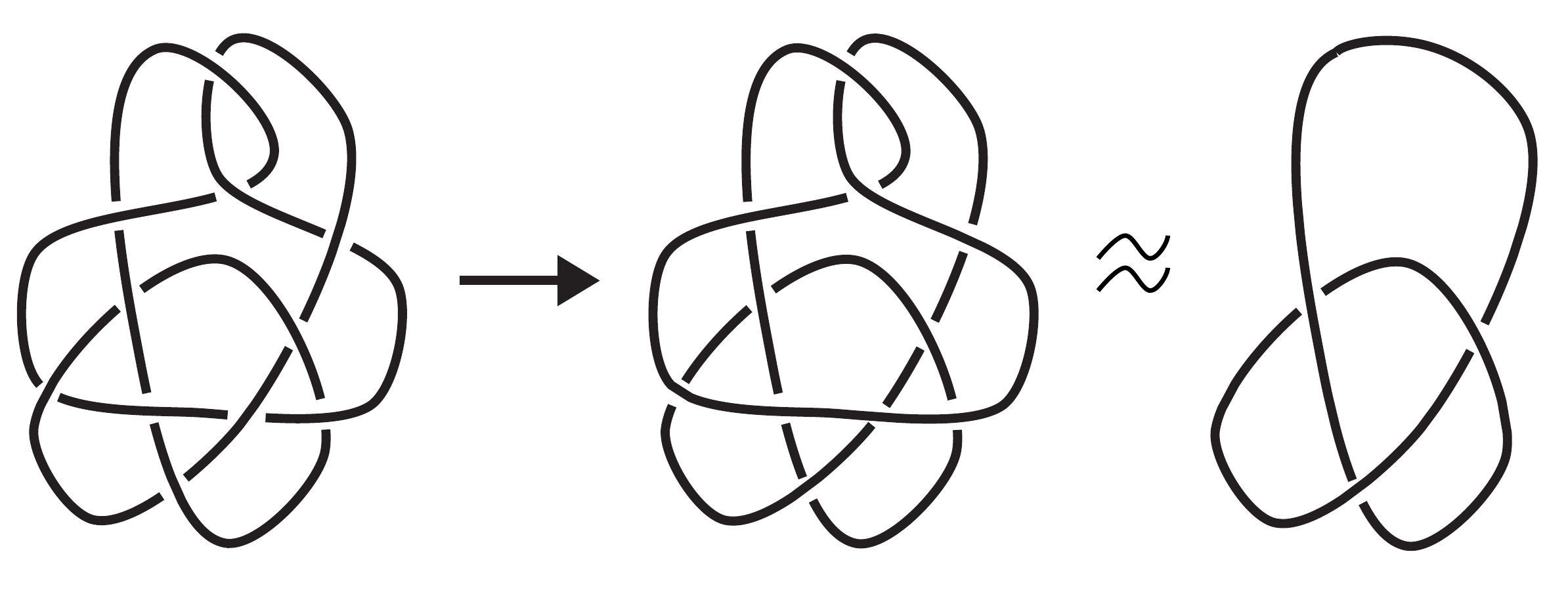} 
\end{center}
\vspace{-.3in}
\caption{A minimal crossing diagram of the $11a135$ knot becomes the trefoil after several crossing changes.}\label{parent_ex}
\end{figure} 

It is when we consider this more general relationship between knots that we see how interesting $7_6$ is, for $7_6$ is a parent of all knots with strictly smaller crossing number. That is, the descendants of $7_6$ are: $0_1$, $3_1$, $4_1$, $5_1$, $5_2$, $6_1$, $6_2$, and $6_3$. It is precisely for this reason that we call $7_6$ a {\em fertile} knot.

\begin{definition} A knot $K$ with crossing number $n$ is \textbf{fertile} if $K$ is a parent of every knot with crossing number less than $n$.
\end{definition}

Now that we have defined the concept of fertility, it is natural to ask, ``Which other knots are fertile?'' and ``Are there any fertile knots with more than seven crossings?'' In thinking about the first question, we note that it is a straightforward exercise to verify that $0_1$, $3_1$, $4_1$, $5_2$, $6_2$, and $6_3$ are also fertile. In Section~\ref{families}, we will develop tools to prove that knots such as $5_1$, $6_1$, $7_1$, and $7_2$ are {\em not} fertile. The computational results we present in Section~\ref{computations} illustrate that many other knots with seven or more crossings fail to be fertile. Answering the second question is somewhat trickier. We will introduce the related notions of $n$-fertility and $(n,m)$-fertility in Section~\ref{n-fertile} to reframe this question.

In addition to considering questions that specifically pertain to fertility, we consider more broadly the parent--descendant relationships between knots. For instance, is this relationship {\em transitive}? That is, if $A$ is a descendant of $B$ and $B$ is a descendant of $C$, is $A$ a descendant of $C$? We return to this fascinating question in Section~\ref{computations}.

\section{Preliminaries}

Before we get started, we note that it is often convenient to work with an alternative, but equivalent definition of what it means for one knot to be a parent (or descendant) of another. Suppose $K$ is a parent of $H$. Then if we consider all minimal diagrams of $K$ and {\em forget} these diagrams' crossing information---so we merely consider their {\bf shadows}---then $H$ must be obtainable by some choice of crossing information in one of these shadows. Figure~\ref{shadow} gives an example of how we derive a descendant from a parent following this process.

\begin{figure}[!htbp] 
\begin{center}
\includegraphics[height=1.3in]{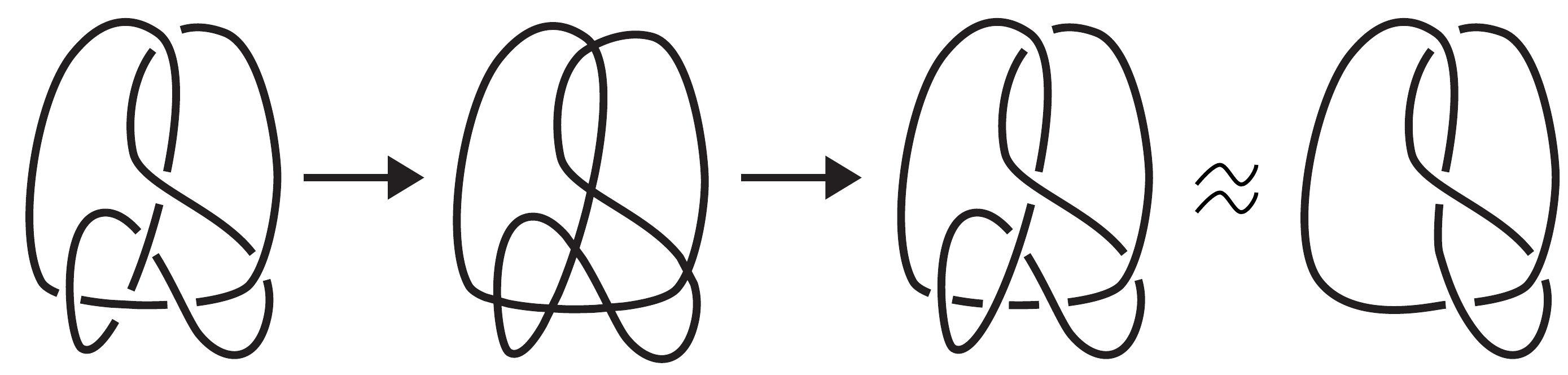} 
\end{center}
\vspace{-.3in}
\caption{Starting with a minimal crossing diagram of $7_6$ (left), we forget crossing information to obtain the diagram's shadow. We then choose all new crossing information in the shadow and find we have a diagram of $4_1$ (right). This illustrates that $7_6$ is a parent of $4_1$.}\label{shadow}
\end{figure} 

We note that a diagram that is missing none, all, or some of its crossing information is referred to as a {\em pseudodiagram}, as in \cite{hanaki}. Unknown crossings in a pseudodiagram are called {\em precrossings.} Equivalence relations for pseudodiagrams, called pseudo-Reidemeister moves, allow us to think of pseudodiagrams in terms of the knots they have the potential to represent. (See \cite{pseudo} to learn much more about these objects.) Pseudodiagrams and pseudoknots will be useful objects in some constructions in Section~\ref{families}.

%%%%Consider adding a section here with the mirror image result and results about connect sums%%%%

There are a couple of preliminary results we can state now. For instance, we notice that we can equate knots with their mirror images when proving results about knot lineage, on account of the following general fact.

\begin{proposition}\label{mirror}
If a knot $K$ is a parent of a knot $H$, then $K$ is also a parent of the mirror image of $H$.
\end{proposition}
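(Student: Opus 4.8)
The plan is to exploit the standard fact that a diagram of the mirror image $\overline{H}$ of $H$ is obtained from any diagram of $H$ by switching every one of its crossings. Combining this with the hypothesis that $K$ is a parent of $H$ should let me produce $\overline{H}$ from the \emph{same} minimal diagram of $K$ by changing a complementary subset of its crossings, which is exactly what parenthood requires.

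First I would fix a minimal crossing diagram $D$ of $K$ together with a subset $S$ of its crossings such that changing exactly the crossings in $S$ yields a diagram $D_H$ of $H$; such an $S$ exists by the definition of parenthood. Let $C$ denote the full set of crossings of $D$, which is also the set of crossings of $D_H$ since a crossing change leaves the underlying shadow untouched.

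Next I would consider changing, in $D$, the complementary subset $C \setminus S$ of crossings rather than $S$ itself. The key observation is that a crossing change is an involution: switching any single crossing twice restores it. Consequently, changing $C \setminus S$ in $D$ produces the very same diagram as first changing $S$ (which gives $D_H$) and then changing all of $C$. Since switching every crossing of $D_H$ yields a diagram of $\overline{H}$, the diagram obtained from $D$ by changing the subset $C \setminus S$ is a diagram of $\overline{H}$.

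Because $D$ is still a minimal crossing diagram of $K$ and $C \setminus S$ is a subset of its crossings, this exhibits $\overline{H}$ as a descendant of $K$, i.e., $K$ as a parent of $\overline{H}$, completing the argument. I expect no serious obstacle: the only point requiring care is the bookkeeping that ``change $S$, then change everything'' agrees with ``change the complement $C \setminus S$,'' which is precisely the involutive parity statement above (formally, the indicator subsets combine by symmetric difference, and $S \mathbin{\triangle} C = C \setminus S$ whenever $S \subseteq C$).
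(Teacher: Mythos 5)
Your proposal is correct and uses the same key idea as the paper: reversing every crossing of the diagram of $H$ yields a diagram of the mirror image, and the paper phrases this as resolving the precrossings of the shadow of a minimal diagram of $K$ ``in precisely the opposite way.'' Your explicit bookkeeping with the complementary subset $C \setminus S$ and the symmetric-difference identity is just a more careful spelling-out of the same argument.
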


\begin{proof} Let $P$ be the shadow corresponding to a minimal diagram of knot $K$, and suppose that $H$ is a descendant of $K$. Then we can resolve the precrossings of $P$ to produce $H$. Resolving the precrossings of $P$ in precisely the opposite way produces a diagram of the mirror image of $H$. So $K$ must also be a parent of the mirror image of $H$.
\end{proof}

We also make the following observation related to composite knots.

\begin{proposition}
Suppose that $K_1$ and $K_2$ are either both alternating knots or both torus knots, $H_1$ is a descendant of $K_1$, and $H_2$ is a descendant of $K_2$. Then $H_1\# H_2$ is a descendant of $K_1\# K_2$.
\end{proposition}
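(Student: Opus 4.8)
The plan is to manufacture a single minimal crossing diagram of $K_1 \# K_2$ out of minimal diagrams of the two summands and then resolve its crossings one summand at a time. Working in terms of shadows, since $H_i$ is a descendant of $K_i$ I can fix a minimal crossing diagram $D_i$ of $K_i$ together with a choice of crossing changes on $D_i$ (equivalently, a resolution of its shadow) that produces a diagram of $H_i$, for $i = 1, 2$. I would then splice $D_1$ and $D_2$ together along an outermost arc of each to form the connect-sum diagram $D_1 \# D_2$, whose crossing set is exactly the disjoint union of the crossings of $D_1$ and those of $D_2$, with no new crossings introduced in the splicing band.

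The key step is to verify that $D_1 \# D_2$ is a \emph{minimal} crossing diagram of $K_1 \# K_2$. Since each $D_i$ is minimal it has $c(K_i)$ crossings, so $D_1 \# D_2$ has $c(K_1) + c(K_2)$ crossings; the bound $c(K_1 \# K_2) \le c(K_1) + c(K_2)$ holds for all knots, so $D_1 \# D_2$ is minimal as soon as the reverse inequality $c(K_1 \# K_2) \ge c(K_1) + c(K_2)$ is known. This additivity of the crossing number is exactly where the hypothesis is used: it is a famous open problem in general but is known when both summands are alternating (via the Kauffman--Murasugi--Thistlethwaite results, e.g. by comparing with the span of the Jones polynomial, which is additive under connect sum and bounds the crossing number from below, with equality for alternating knots) and likewise when both summands are torus knots. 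In either case $D_1 \# D_2$ realizes $c(K_1 \# K_2)$ crossings and is therefore a minimal diagram.

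Granting minimality of $D_1 \# D_2$, the conclusion is immediate. Its crossings split into those inherited from $D_1$ and those inherited from $D_2$, so I can perform precisely the crossing changes that turn $D_1$ into a diagram of $H_1$ on the first block and those that turn $D_2$ into a diagram of $H_2$ on the second block. Because each crossing change is a strictly local modification and the splicing band contains no crossings, the resulting diagram is the connect sum of a diagram of $H_1$ with a diagram of $H_2$, hence a diagram of $H_1 \# H_2$. Thus a subset of the crossings of the minimal diagram $D_1 \# D_2$ of $K_1 \# K_2$ can be changed to produce $H_1 \# H_2$, which is exactly what it means for $H_1 \# H_2$ to be a descendant of $K_1 \# K_2$.

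The genuine obstacle is the minimality step of the second paragraph: additivity of the crossing number under connect sum is open in general, and it is precisely its validity for the alternating and torus classes that forces the hypothesis of the proposition and explains why mixed connect sums are excluded. Everything else---the splicing, the bookkeeping of the crossing set, and the locality of crossing changes---is routine once a minimal diagram of $K_1 \# K_2$ is known to be obtainable as the splice of minimal diagrams of the two summands.
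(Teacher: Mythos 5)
Your proof is correct and follows essentially the same route as the paper's: splice minimal diagrams of $K_1$ and $K_2$, invoke the known minimality of the resulting connect-sum diagram for alternating summands (Kauffman--Murasugi--Thistlethwaite) and for torus-knot summands, and then resolve the crossings of each block to produce $H_1$ and $H_2$ respectively. You correctly identify the minimality of the spliced diagram as the crux and the reason for the hypothesis, which the paper also notes by remarking that connect sums of minimal diagrams need not be minimal in general.
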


\begin{proof} For a pair of alternating knots $K_1$ and $K_2$, we know by \cite{kauffman1, murasugi, thistle} that forming the connect sum of a minimal diagram of $K_1$ with a minimal diagram of $K_2$ in the plane---as in Figure~\ref{comp}---produces a minimal diagram of $K_1\#K_2$. A similar result holds if both $K_1$ and $K_2$ are torus knots by \cite{diao2}. (We note that, in general, forming the connect sum of two minimal crossing diagrams of factor knots may not produce a minimal diagram of the composite knot \cite{lackenby}.) Now, if we resolve a shadow of a minimal diagram of $K_1$ to produce $H_1$ and, likewise, resolve a shadow of a minimal diagram of $K_2$ to produce $H_2$, these diagrams can be connected to produce a diagram of $H_1\# H_2$ that projects to the shadow of a minimal diagram of $K_1\#K_2$.
\end{proof}

\vspace{-.2in}
\begin{figure}[!htbp] 
\begin{center}
\includegraphics[height=1.3in]{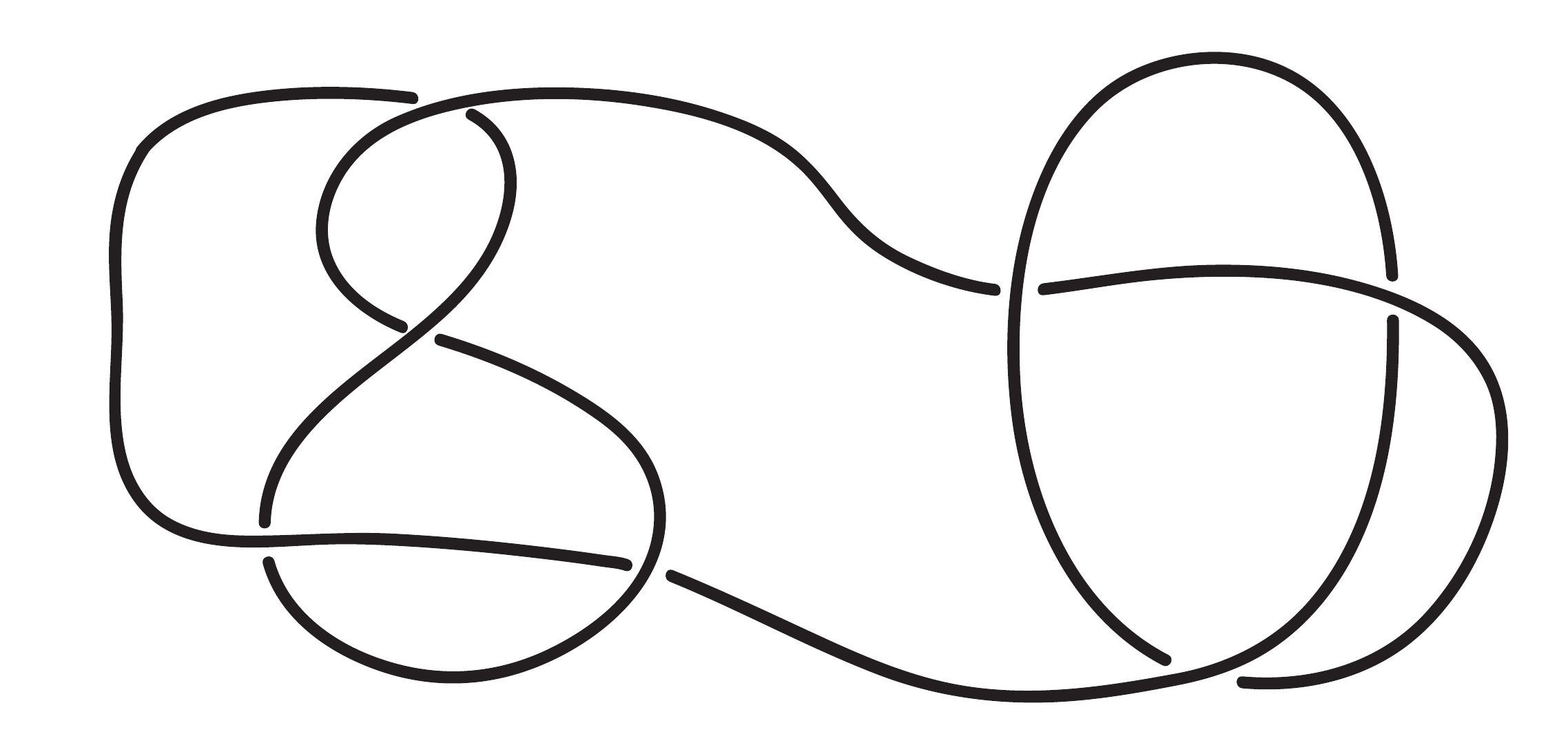} 
\end{center}
\vspace{-.3in}
\caption{The composition of minimal diagrams of $4_1$ and $3_1$.}\label{comp}
\end{figure} 

%In the general case of composite knots where we do not make any claims about knot types of factor knots, we can at least prove the following.
%
%\begin{proposition} If $H$ is a descendant of $K_1$  or $K_2$, then $H$ is a descendant of $K_1\# K_2$.
%\end{proposition}

%Think about this!
%\begin{proof} I'm not quite sure how to say this.
%\end{proof}

Now, without further ado, let us learn about descendant relations in families of knots.

\section{Families of Knots}\label{families}

In this section, we consider two key knot families: twist knots and $(2,p)$-torus knots. These families distinguish themselves by being closely related to one another---in terms of descendant and parent relations---and rather insular. As a result, we will observe that knots in these families tend to have few descendants.

\begin{figure}[!htbp]
\begin{center}
\includegraphics[width=6.5cm]{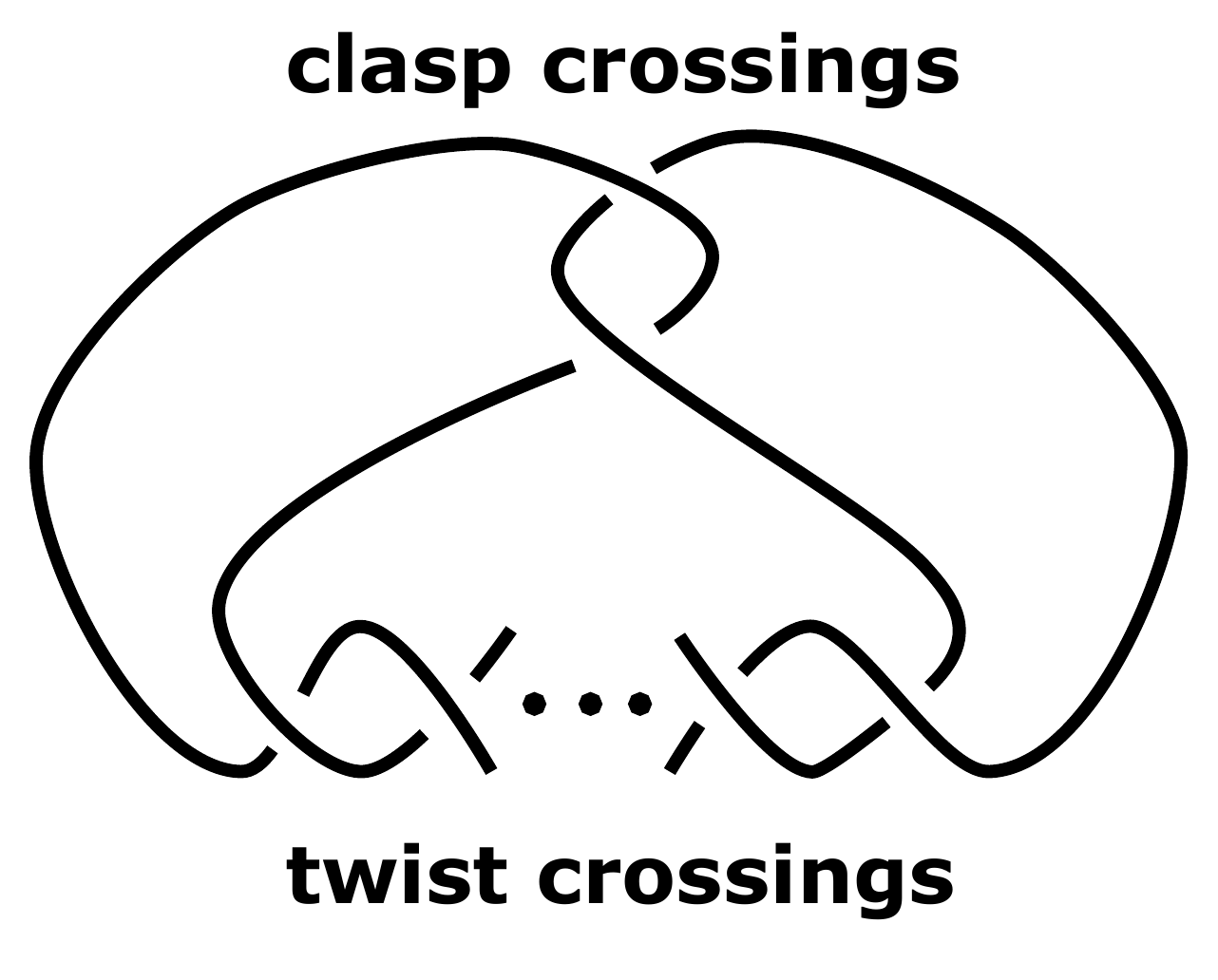}
\end{center}
\vspace{-.2in}
\caption{A generic example of a minimum crossing diagram of a twist knot.}\label{twist_gen}
\end{figure}

Keeping this functional equivalence of knots and their mirror images in mind, let us turn to twist knots. A \textit{twist knot} $T_n$ is a knot formed by two {\em clasp crossings} and $n$ {\em twist crossings}, as in Figure \ref{twist_gen}. We note that all twist knots are alternating, so it suffices to consider their minimal crossing diagrams as in the figure, where the alternating pattern is preserved as we pass {\em between} the clasp crossings and the twist crossings. For these knots, we have the following theorem.

\begin{theorem}\label{twist} The knot $K$ is a descendant of twist knot $T_n$ if and only if $K = T_k$ for some integer $k$ with $0 \leq k \leq n$.  
 \end{theorem}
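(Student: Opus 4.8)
The plan is to work throughout with shadows of the minimal diagram of $T_n$, as in the Preliminaries, so that producing a descendant amounts to choosing crossing information for the $n+2$ precrossings of the generic diagram in Figure~\ref{twist_gen} (the $n$ twist precrossings together with the $2$ clasp precrossings); by Proposition~\ref{mirror} I may argue up to mirror images and so ignore the chirality of each resolution. I would split the biconditional into the two expected directions. For the forward direction it is convenient to first dispose of the crossing-number bookkeeping: any resolution of an $(n+2)$-crossing shadow is a diagram with at most $n+2$ crossings, and since each $T_k$ is alternating and reduced its crossing number is exactly $k+2$ (for $k\geq 1$), so once a descendant is known to be some twist knot it must satisfy $k+2\le n+2$, i.e. $k\le n$. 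Thus the real content of the forward direction is that every resolution is a twist knot (or the unknot $T_0$), while the reverse direction is the explicit construction of each $T_k$.

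For the reverse direction I would exhibit two families of resolutions. First, leaving the clasp resolved as a genuine clasp and resolving the twist region so that $(n-k)/2$ adjacent pairs of twist precrossings cancel by Reidemeister~II, I obtain a reduced twist region of $k$ crossings and hence $T_k$; this realizes every $k$ with $k\equiv n\pmod 2$. The remaining values have the opposite parity, so some resolution must drop the crossing number by an odd amount, which forces a Reidemeister~I reduction somewhere. The model case is $T_2=4_1$: a suitable resolution of its four precrossings yields a nonminimal four-crossing diagram that simplifies to the trefoil $3_1=T_1$ (concretely, writing $4_1$ as the closure of $(\sigma_1\sigma_2^{-1})^2$ and reversing both $\sigma_2$ crossings gives the closure of $(\sigma_1\sigma_2)^2$, a trefoil). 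I would then describe the analogous resolution at the clasp--twist junction of $T_n$, converting the diagram into one that reduces to $T_{n-1}$; combining this single odd reduction with pairwise cancellations in the remaining twist region realizes every $T_k$ of parity $n-1$ with $k\le n-1$. The two families together cover all $0\le k\le n$.

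For the forward direction the main obstacle is proving completeness: that no resolution produces anything other than a twist knot. I would organize the $2^{n+2}$ resolutions according to (i) whether the two clasp precrossings reinforce into a genuine clasp or cancel, and (ii) the reduced state of the twist region after all available Reidemeister~II cancellations. The delicate point---and the crux of the whole argument---is that these two regions do \emph{not} resolve independently: as the $4_1\to 3_1$ example shows, a clasp resolution can combine with an adjacent twist crossing to delete a single crossing, shifting the effective twisting by one, so a naive ``clasp value plus net twist'' count is incorrect. The work is therefore a careful accounting of exactly which Reidemeister~I and~II reductions are available at the clasp--twist junction in each case, verifying that every resulting diagram is the standard diagram of some $T_k$ and, in particular, that coherent torus-type patterns (such as the one that would be needed to produce $5_1$) never arise on this shadow. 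The pseudodiagram reductions flagged in the Preliminaries are the natural tool for making this case analysis rigorous; once completeness is established, the crossing-number bound from the first paragraph finishes the proof.
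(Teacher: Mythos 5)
Your overall strategy matches the paper's: Reidemeister~II cancellation of opposite-sign pairs in the twist region does the work in both directions up to parity, and a single odd reduction at the clasp--twist junction handles the remaining parity class. Your reverse direction is essentially the paper's construction (its Case~1 is your first family, its Case~2 your second), and your observation that $cr(T_k)=k+2$ for reduced alternating diagrams forces $k\le n$ is a clean, if ultimately unnecessary, addition, since the bound falls out of the reduction itself.

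The gap is in the forward direction, and you have located it precisely yourself: everything hinges on showing that after the clasp is resolved coherently and the twist region is R2-reduced to same-sign crossings, the resulting diagram really is a twist knot. You write that the work is ``a careful accounting of exactly which Reidemeister~I and~II reductions are available at the clasp--twist junction in each case,'' but you never perform that accounting, so the completeness claim---that no resolution yields anything outside $\{T_0,\dots,T_n\}$---is asserted rather than proved. The paper closes this with a concrete dichotomy: once the surviving twist crossings all have the same sign, the diagram either is already the standard alternating diagram of some $T_k$, or fails to alternate only where the twist region meets the clasp, and in that one remaining configuration an explicit Reidemeister sequence (Figure~\ref{the_end}) deletes exactly one crossing and lands on the standard diagram of a smaller twist knot. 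Exhibiting that move, or an equivalent finite case analysis of the sign patterns at the junction, is the substance of the proof; without it your argument shows only that every descendant has at most $n+2$ crossings, not that it is a twist knot. Note also that this same unexhibited move is what your second family in the reverse direction leans on (you give it only for the model case $4_1\to 3_1$), so it is doing double duty and really must be written down in general.
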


 \begin{proof}
We begin by proving the ``only if'' direction of Theorem \ref{twist}. Suppose that the knot $K$ is a descendant of twist knot $T_n$ for some positive integer $n$. Then there exists a choice of crossing information for all crossings in the shadow of a standard minimal crossing diagram of $T_n$ (i.e., the shadow of a diagram from Figure~\ref{twist_gen}) that produces a diagram of $K$. Suppose that the clasp crossings, $c_1$ and $c_2$, have opposite signs in this resolution. Then, $K$ is the unknot, $T_{0}$. Otherwise, $c_1$ and $c_2$ are alternating. 

In the remaining $n$ twist crossings, $p$ crossings are positive, and $n-p$ crossings are negative. We can perform $p$ or $n-p$ (whichever number is smaller) Reidemeister 2 moves to make the twist crossings alternating. If $p=n-p$, the result is a two-crossing diagram of the unknot, $T_0$. Otherwise, some number of twist crossings with the same sign remain. In this case, either the entire knot diagram is alternating, or we can remove one crossing---producing a diagram of $T_k$---by using the sequence of Reidemeister moves pictured in Figure \ref{the_end}. 

\begin{figure}
\begin{center}
\includegraphics[width=11cm]{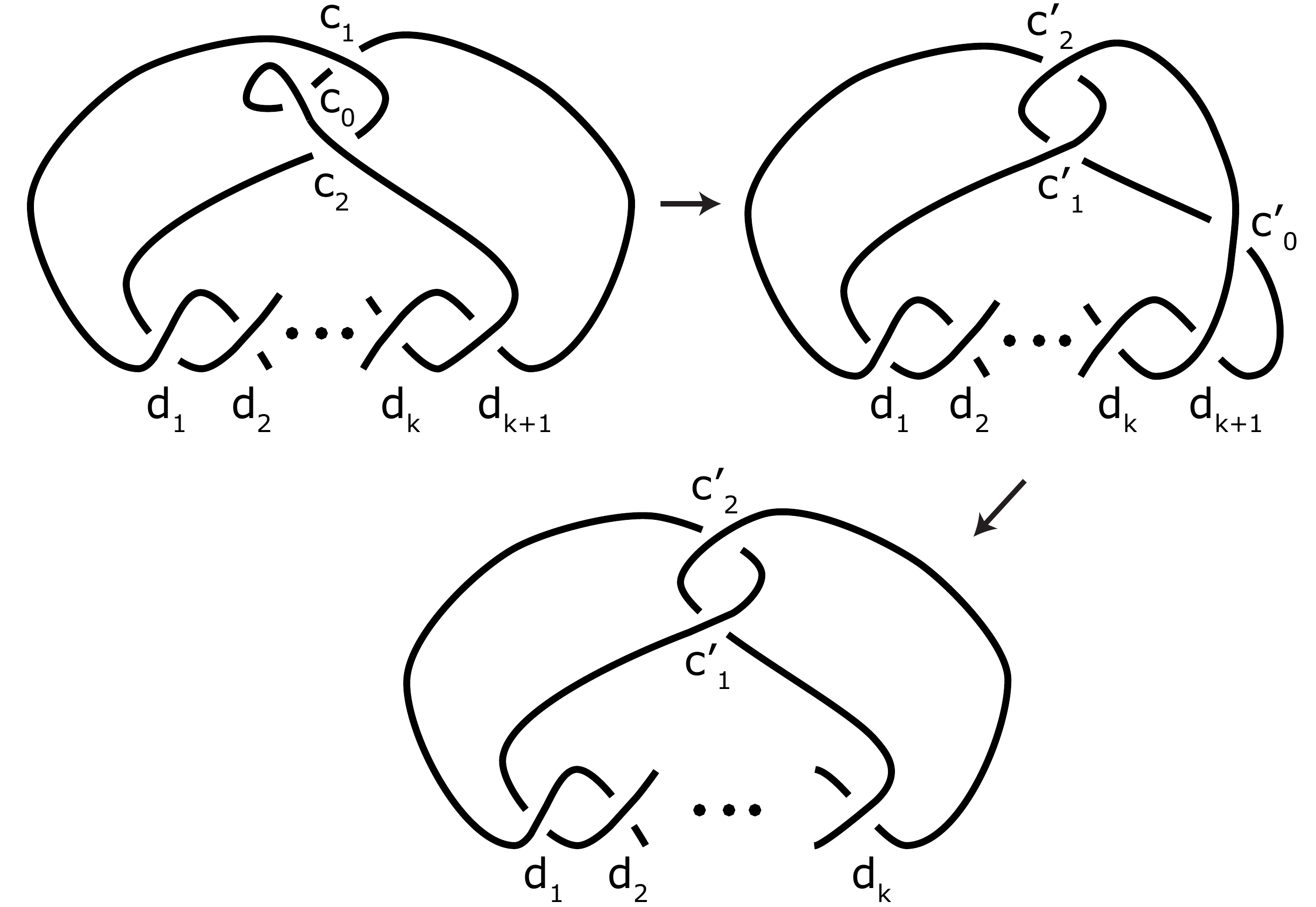}
\end{center}
\vspace{-.1in}
\caption{Simplifying Reidemeister moves}\label{the_end}
\end{figure}
 
To prove the ``if'' direction of the theorem, let $P$ be the shadow of a minimal crossing diagram of $T_n$, and suppose that $k$ is an integer such that $0\leq k \leq n$. For the remainder of this proof, refer to Figure \ref{labetwist} for orientation and labeling conventions.

\begin{figure}
\begin{center}
\includegraphics[width=6.5cm]{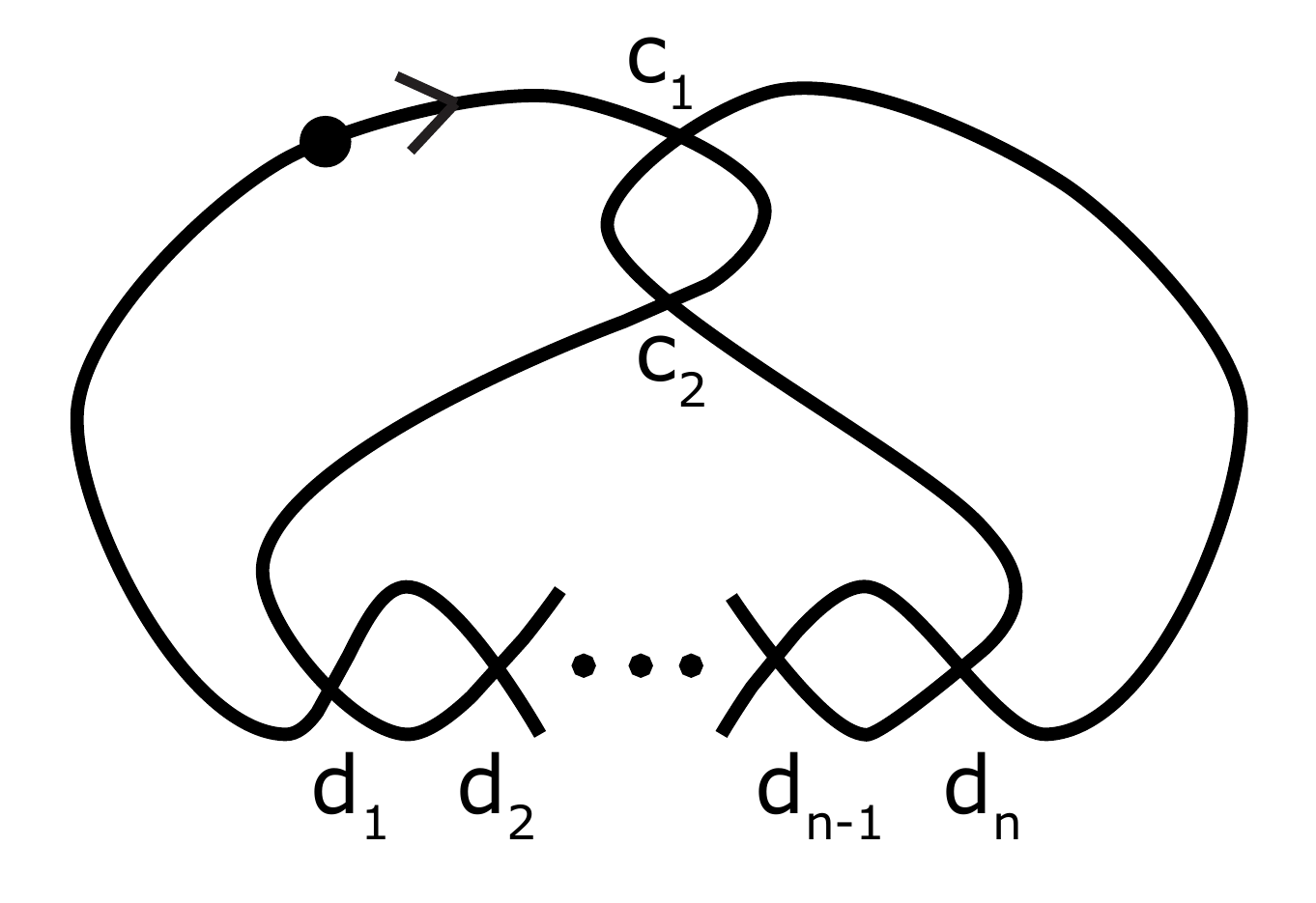}
\end{center}
\vspace{-.2in}
\caption{Twist knot with orientation and labeled precrossings.}\label{labetwist}
\end{figure}

\textit{Case 1: $n-k$ is even.} In the twist of $P$, we resolve the $n-k$ crossings $d_{k+1}, \ d_{k+2},$ $ \cdots, d_{n}$ so that half are positive and half are negative. We then perform $\frac{1}{2}(n-k)$ Reidemeister 2 moves on these crossings to produce the shadow $P'$ of a minimal crossing diagram of $T_k$. Finally, we resolve the precrossings in $P'$ to produce a diagram of $T_k$.

\textit{Case 2: $n-k$ is odd.} We resolve the $n-k-1$ precrossings, $d_{k+2}, \ d_{k+3},  \cdots , d_{n},$ in the twist of $P$ as in Case 1 so that half of the crossings are positive and half of the crossings are negative. Then, we perform $\frac{1}{2}(n-k-1)$ Reidemeister 2 moves to eliminate crossings $d_{k+2}, \ d_{k+3}, \ \cdots , d_{n}$. 

Next, if $k+1$ is odd, we resolve crossings $c_1$ and $c_2$ to be negative and $d_1, \cdots, d_{k+1}$ to be positive. If $k+1$ is even, we resolve all of the crossings in the shadow so they are positive. In both cases, the effect of resolving precrossings in this way puts us in the situation of Figure \ref{the_end} where we obtain a minimal crossing diagram of $T_{k}$ after a sequence of Reidemeister moves.
\end{proof}

Next, we prove a similar result for another knot family: the $T_{2,p}$ torus knots, i.e., the knots that can be represented as closures of 2-braids.  Note that, since these torus knots are alternating, it suffices to consider their standard minimal diagrams as in, for example, Figure~\ref{torus_r2}.

\begin{theorem}\label{2ptorus}
The knot $K$ is a descendant of torus knot $T_{2,p}$ if and only if $K = T_{2, q}$ for some $0<q \leq p$, where $p$ and $q$ are odd integers. 
\end{theorem}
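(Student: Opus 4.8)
The plan is to follow the template of the twist-knot argument in Theorem~\ref{twist}, taking advantage of the fact that the standard minimal diagram of $T_{2,p}$ is exactly the closure of a $2$-braid on $p$ crossings, and that the $2$-strand braid group is infinite cyclic. Throughout, I identify a knot with its mirror image, which is justified by Proposition~\ref{mirror}.

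For the ``only if'' direction, let $P$ be the shadow of this standard diagram, a stack of $p$ precrossings $c_1,\dots,c_p$ closed up into a $2$-braid closure. Any resolution of $P$ assigns each $c_i$ a sign, so $K$ is the closure of a word $\sigma_1^{e_1}\cdots\sigma_1^{e_p}$ with each $e_i=\pm1$. Whenever two vertically adjacent crossings carry opposite signs they form a $\sigma_1\sigma_1^{-1}$ pair bounding a bigon, which a Reidemeister~2 move removes; iterating this reduces the diagram to the closure of $\sigma_1^{m}$, where $m=\sum_i e_i = a-b$ with $a$ positive and $b$ negative crossings and $a+b=p$. Because $p$ is odd, $m\equiv p\equiv 1\pmod 2$, so $m$ is odd (in particular nonzero) with $|m|\le p$. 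The closure of $\sigma_1^{m}$ is $T_{2,m}$, which I identify with $T_{2,|m|}$; hence $K=T_{2,q}$ for the odd integer $q=|m|$ with $0<q\le p$ (the value $q=1$ giving the unknot $T_{2,1}$).

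For the ``if'' direction, fix an odd integer $q$ with $0<q\le p$ and set $a=\tfrac12(p+q)$ and $b=\tfrac12(p-q)$; both are nonnegative integers since $p\ge q$ and $p\equiv q\pmod 2$. I resolve the precrossings of $P$ so that crossings $c_1,\dots,c_q$ are positive and the remaining $p-q$ crossings are resolved in adjacent opposite-signed pairs. Each such pair cancels by a Reidemeister~2 move, leaving the closure of $\sigma_1^{q}$, which is a minimal diagram of $T_{2,q}$.

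The step requiring the most care is the geometric realization of these cancellations: I must check that in a $2$-braid any two vertically adjacent precrossings resolved with opposite signs genuinely bound a removable bigon, and that repeatedly removing such bigons reduces an arbitrary sign assignment to a single block of like-signed crossings. This is the $2$-braid analogue of the simplification shown in Figure~\ref{the_end}, and amounts to the geometric content of the isomorphism $B_2\cong\Z$. Once this reduction is established, both directions follow from the parity bookkeeping above.
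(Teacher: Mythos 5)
Your argument is essentially the paper's own proof, just phrased in the language of the braid group $B_2\cong\Z$: in both directions you count positive and negative resolutions and cancel opposite-signed adjacent pairs by Reidemeister 2 moves, which is exactly the paper's computation that $\min\{n,m\}$ R2 moves leave $T_{2,p-2\min\{n,m\}}=T_{2,|n-m|}$. The parity bookkeeping and the explicit choice of $\tfrac12(p+q)$ positive and $\tfrac12(p-q)$ negative crossings in the converse also match the paper, so the proposal is correct and takes the same route.
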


\begin{proof}
Let $K$ be a descendant of torus knot $T_{2,p}$ for some positive odd integer $p$. Then, there exists a choice of crossing resolutions that produces a diagram of $K$ from the shadow of a minimal crossing diagram of $T_{2,p}$. Suppose there are $n$ positive crossing resolutions and $m$ negative crossing resolutions for some non-negative integers $n$ and $m$. If we let $l=$ min$\{n,m\}$, we can perform $l$ Reidemeister 2 moves to reduce the number of crossings in the diagram by $2l$. The resulting knot is $T_{2, p-2l}$.
 
To prove the converse, we consider the torus knot $T_{2,p}$ where $p$ is a positive odd integer, and we let $q$ be a positive odd integer less than $p$. In the shadow of $T_{2,p}$, we resolve $q+\frac{p-q}{2}$ crossings to be positive and the remaining $\frac{p-q}{2}$ crossings to be negative. This enables us to perform $\frac{p-q}{2}$ Reidemeister 2 moves---as in the Figure \ref{torus_r2} example---to obtain a minimal crossing diagram of $T_{2,q}$.  \end{proof}

\begin{figure}[!htbp]
\begin{center}
\includegraphics[width=10cm]{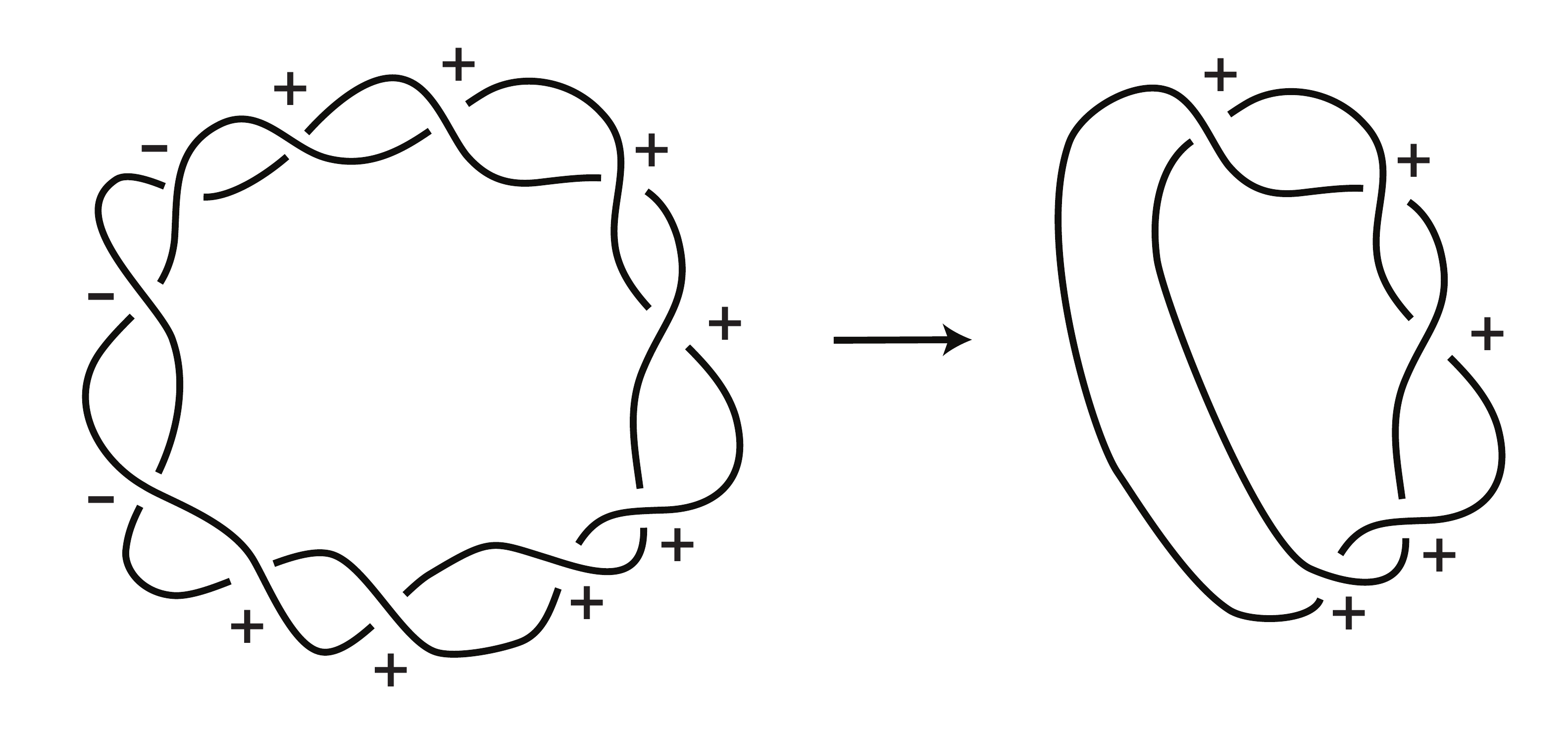}
\end{center}
\vspace{-.2in}
\caption{Resolving crossings in the $T_{2,11}$ shadow to produce a minimal crossing diagram of $T_{2,5}$ after three R2 moves.}\label{torus_r2}
\end{figure}

\section{Notions of Fertility}\label{n-fertile}

As we mentioned in the introduction, $0_1$, $3_1$, $4_1$, $5_2$, $6_2$, $6_3$, and $7_6$ are all fertile knots. On the other hand, the results of Section 2 allow us to show that knots $5_1$, $6_1$, $7_1$, and $7_2$ are not fertile. Indeed, the knot $5_1$ is $T_{2,5}$ and $7_1$ is $T_{2,7}$, while the figure-eight knot, $4_1$ does not equal $T_{2,p}$ for any $p$, so $4_1$ fails to be a descendant of either knot. Furthermore, $6_1$ and $7_2$ are both twist knots while $5_1$ is not a twist knot and, hence, not a descendant. So both $6_1$ and $7_2$ fail to be fertile. A fun afternoon exercise shows that all other knots with seven or fewer crossings---namely, $7_3$, $7_4$, $7_5$, and $7_7$---fail to be fertile. For instance, we can show that $6_1$ is not a descendant of $7_3$ or $7_5$ while $6_3$ is not a descendant of $7_4$ or $7_7$.

What about prime knots with more than seven crossings? What about composite knots? Computations for all prime and composite knots with 10 or fewer crossings reveals that there are no 8-, 9-, or 10-crossing fertile knots. In fact, we conjecture that there are no fertile knots with crossing number greater than seven. 

So, is fertility a helpful notion? Yes and no. To gain insights about a knot's structure from the notion of fertility, we expand our definition.

\begin{definition} A knot is {\bf \em n}-{\bf fertile} if it is a parent of every knot with $n$ or fewer crossings.
\end{definition}

In other words, if a knot $K$ is $n$-fertile and has crossing number $cr(K)=n+1$, then $K$ is fertile. But for every knot $K$, there is some nonnegative integer $n$ for which $K$ is $n$-fertile. This notion enables us to define a new knot invariant.

\begin{definition} The {\bf fertility number}, $F(K)$, of a knot $K$ is the greatest integer $n$ for which $K$ is $n$-fertile. 
\end{definition}

What do we already know about the fertility number? First, the maximum value of $F(K)$ for a knot with crossing number $cr(K)>4$ is $cr(K)-1$. (Notice that the figure-eight knot is 4-fertile, the trefoil is 3-fertile, and the unknot is 0-fertile.) Furthermore, 0 is clearly a lower bound for $F(K)$, but perhaps we can say more. In Section~\ref{opp}, we will discuss how to improve the lower bound of $F(K)$ in general. For small knots, the results in Section~\ref{computations} will tell us so much more. 

Before we look at fertility numbers for specific knots, we introduce one more notion related to knot fertility called $(n,m)$-fertility. 

\begin{definition}\label{nm} A knot $K$ is {\bf ({\em n,m})}-{\bf fertile} if for each prime knot $H$ with $cr(H)\leq n$, there is a knot shadow (not necessarily from a minimal diagram) with $m$ crossings that contains both $H$ and $K$.\end{definition}

To determine if a knot $K$ is $(n,m)$-fertile, one searches through all $m$-crossing shadows that have $K$ as a resolution. (For instance, if $m=cr(K)$, then this set of
shadows is just all shadows coming from minimal diagrams of $K$.)  If the union of the resolution sets of all these shadows contains all knots with $n$ or fewer crossings, then $K$ is $(n,m)$-fertile. 

One of the first things we notice from Definition~\ref{nm} is that $(n-1,n)$-fertility is equivalent to fertility for $n$ crossing knots. We explore the concept of $(n,m)$-fertility further in Sections~\ref{computations} and \ref{opp}.

\section{Computational Results}\label{computations}

Cantarella et al.~\cite{knotdiagrams} have computed an exhaustive list of all
knot shadows through 10 crossings.  By assigning all possible crossings to
these shadows and analyzing all knot types resulting from the
resulting knot diagrams, we obtain complete information about knot
fertility for knots through 10 crossings.  %These shadows are freely available at \textbf{FILLTHISIN}. %%%Eric??

The knot types were computed using \texttt{lmpoly}, a program by Ewing
and Millett \cite{millettewing} to compute the HOMFLYPT polynomial
\cite{HOMFLY} of diagrams, and \texttt{knotfind}, a program to compute
knot types from Dowker codes, which is a part of the larger program
Knotscape by Hoste and Thistlethwaite \cite{knotscape}.  The program
\texttt{knotfind} does not distinguish between the different
chiralities of chiral knots.  Since fertility is a non-chiral
property (see Prop. \ref{mirror}), \texttt{knotfind} would have been sufficient for the work
here.  However, we used the HOMFLYPT computation via \texttt{lmpoly}
as a further check on the data.

In Table \ref{shadowcountstable}, we show for each crossing number the total number of shadows, the number of
shadows which host only unknots (which we call {\em totally unknotted}), and the number of shadows which yield
a minimal crossing diagram (partitioned into prime and composite knot shadows).
Note that some shadows host minimal diagrams for multiple
knot types. In the table, the number of minimal diagrams are counted
without multiplicity.

\begin{table}
  \caption{Counts for the number of shadows, number of totally unknotted shadows, and number of shadows corresponding to minimal diagrams by crossing number.}
  \medskip
  
  \centering
\begin{tabular}{c||c|c|c|c}
  crossings & shadows & totally unknotted & minimal prime & minimal composite \\
  \hline
  3 & 6 & 5 & 1 & 0\\
  4 & 19 & 16 & 1 &0 \\
  5 & 76 & 55 & 2 & 0\\
  6 & 376 & 240 & 3 & 2 \\
  7 & 2194 & 1149 & 10 & 3 \\
  8 & 14614 & 6229 & 27 & 13 \\
  9 & 106421 & 35995 & 101 & 59 \\
  10 & 823832 & 219272 & 364 & 263
\end{tabular}
\label{shadowcountstable}
\end{table}

When different minimal diagrams exist for alternating knot types, they
are all related by flype moves \cite{MWTetal}.  Thus, the list of descendants derived from one minimal diagram of an alternating knot is precisely the same as the list derived from any other minimal diagram.  
Minimal diagrams of non-alternating knot types, though, need not be related by flype moves. As a result, the lists of
descendants derived from different minimal diagrams of a non-alternating knot might differ.  In our fertility
table, Table \ref{fertilitytable}, we considered a knot type $H$ to be
a descendant of a non-alternating knot type $K$ if there exists a shadow containing $n=cr(K)$ crossings
that has both $K$ and $H$ as resolutions. This is consistent with---though it represents a different way of viewing---our original definition of ``descendant." 

To give the reader some idea of the computational complexity of this problem, we provide Table \ref{nonalttable}, which shows the number of minimal diagrams for non-alternating knot types through 10 crossings.

%\vspace{-.2in}
\begin{table}[h]
  \caption{Numbers of minimal diagrams for non-alternating prime knot types}
  \medskip
  
  \centering
\begin{tabular}{c|c||c|c||c|c||c|c||c|c}
  knot & shadows & knot & shadows & knot & shadows & knot & shadows & knot & shadows\\
  \hline
  $8_{19}$ & 5  & $10_{124}$ & 13 & $10_{135}$ & 21 & $10_{146}$ & 17 & $10_{157}$ & 2  \\
  $8_{20}$ & 7  & $10_{125}$ & 12 & $10_{136}$ & 26 & $10_{147}$ & 18 & $10_{158}$ & 3  \\
  $8_{21}$ & 5  & $10_{126}$ & 8  & $10_{137}$ & 27 & $10_{148}$ & 11 & $10_{159}$ & 5  \\ 
  $9_{42}$ & 17 & $10_{127}$ & 7  & $10_{138}$ & 19 & $10_{149}$ & 7  & $10_{160}$ & 10 \\ 
  $9_{43}$ & 14 & $10_{128}$ & 20 & $10_{139}$ & 5  & $10_{150}$ & 19 & $10_{161}$ & 8  \\
  $9_{44}$ & 17 & $10_{129}$ & 19 & $10_{140}$ & 16 & $10_{151}$ & 15 & $10_{162}$ & 3  \\
  $9_{45}$ & 14 & $10_{130}$ & 14 & $10_{141}$ & 11 & $10_{152}$ & 1  & $10_{163}$ & 3  \\
  $9_{46}$ & 8  & $10_{131}$ & 14 & $10_{142}$ & 8  & $10_{153}$ & 2  & $10_{164}$ & 3  \\
  $9_{47}$ & 2  & $10_{132}$ & 35 & $10_{143}$ & 13 & $10_{154}$ & 3  & $10_{165}$ & 3  \\
  $9_{48}$ & 4  & $10_{133}$ & 32 & $10_{144}$ & 8  & $10_{155}$ & 6  & \\
  $9_{49}$ & 2  & $10_{134}$ & 20 & $10_{145}$ & 10 & $10_{156}$ & 6  & \\
\end{tabular}
\label{nonalttable}
\end{table}

In Table \ref{fertilitytable}, we record fertility numbers, which provide one sort of summary of our more significant computational results. In fact, we derived an exhaustive list of descendants of all knots with up to 10 crossings. This collection of data is too vast to share here, but we can share some interesting observations we derived from the data. 

First, and perhaps most interestingly, we found the descendant relation is decidedly {\em not} transitive. There are a number of counterexamples
to transitivity, most of which involve non-alternating knot types. However, there are nine examples of non-transitivity involving only
alternating knot types through 10 crossings. See Table~\ref{nontransitive}.

\begin{table}[h]
  \caption{Triples of alternating knots such that $K_1$ is a descendant of $K_2$ and $K_2$ is a descendant of $K_3$, but $K_1$ is not a descendant of $K_3$.}
  \medskip
  
  \centering
\begin{tabular}{c|c|c}
  $K_1$ & $K_2$ & $K_3$\\
  \hline
$7_1$ & $8_9$ & $10_{123}$\\
$7_2$ & $8_{14}$ & $10_{96}$\\
$7_3$ & $8_{7}$ & $10_{116}$\\
$7_3$ & $8_{11}$ & $10_{92}$\\
$7_3$ & $8_{11}$ & $10_{113}$\\
$7_4$ & $8_{11}$ & $10_{92}$\\
$7_4$ & $8_{11}$ & $10_{113}$\\
$7_5$ & $8_{9}$ & $10_{123}$\\
$7_5$ & $8_{14}$ & $10_{117}$\\

\end{tabular}
\label{nontransitive}
\end{table}

The use of the term ``descendant'' in this paper leads to other strange naming conventions for knot relationships.
In particular, we call a knot $K_1$ a \textit{sibling} of a knot $K_2$, where $n=cr(K_1)=cr(K_2)$, if there is an $n$-crossing shadow that
has both $K_1$ and $K_2$ as resolutions.\footnote{The term ``sibling" was used in a similar context in~\cite{diao}, though the two definitions differ.}  Note that for a given shadow, there are only two ways of assigning crossings
to yield an alternating diagram, and these two diagrams are mirror
images.  Thus, these sibling relationships only occur between
alternating and non-alternating or non-alternating and non-alternating
knot types.  Table \ref{siblingstable} shows all sibling relationships
for 8- and 9-crossing knot types.  We created a table of sibling relationships between 10-crossing knots, but it is too unwieldy to include here. For instance, $10_{132}$ alone has 41 siblings!

Another interesting aspect of $n$-fertility that can be studied is a notion of {\em anti-fertility}. Specifically, which knots act as roadblocks to more complex knots achieving $n$-fertility for various values of $n$? For instance, our computations show that the knots $7_1$, $7_4$, and $7_7$ fail to be descendants of 15 of the 18 alternating 8-crossing knots. In essence, these three knots are providing a significant barrier to 8-crossing knots achieving fertility. These three knots continue to be problematic for alternating knots with crossing number 9 or 10. Of the 41 alternating 9-crossing knots, $7_1$ fails to be a descendant of 31 knots, $7_4$ fails to be a descendant of 26 knots, and $7_7$ fails to be a descendant of 19 knots. Similarly, of the 123 alternating 10-crossing knots, $7_1$ fails to be a descendant of 77 knots, $7_4$ fails to be a descendant of 67 knots, and $7_7$ fails to be a descendant of 53 knots. 

Consider another unusual example. If we examine the shadows of alternating 9-crossing knots, we find that $8_{16}$ and $8_{17}$ fail to appear in the resolution sets of of 40 of the 41 representative shadows.\footnote{Note that we are only considering the shadow of one representative from each equivalence class of reduced, alternating knot diagrams that are related by flypes.}  In other words, $8_{16}$ and $8_{17}$ are each only descendants of one alternating 9-crossing knot ($9_{33}$ and $9_{32}$, respectively). The $8_3$ knot provides another barrier to fertility for 9- and 10-crossing knots. It fails to be a descendant of 38 out of 41 alternating 9-crossing knots and 109 of 123 alternating 10-crossing knots.

Just as we can make a number of observations about $n$-fertility and anti-fertility, the data set helps us compute $(n,m)$-fertility numbers for a number of knots. See Tables \ref{m3fertilitytable}-\ref{m10fertilitytable} in the appendix for a complete list of $(n,m)$-fertility numbers for knots with up to 10 crossings. We highlight a subset of this data here, namely examples of knots that are $(k,k)$-fertile for some $k$. Table~\ref{kkfertile} shows a complete list of $(k,k)$-fertile knots for $6\leq k\leq 10$. It is no surprise that the unknot is $(k,k)$ fertile for each $k$ since every knot shadow can be resolved to produce a diagram of the unknot. More interestingly, we notice that $3_1$, $4_1$, and $5_2$ are $(k,k)$-fertile for certain $k$ values, following regular patterns. Using the observations of Table~\ref{kktable} and the results of Section~\ref{opp}, we are able to prove that these patterns hold {\em ad infinitum}.

\begin{table}[h]
  \caption{A  summary of knots that are $(k,k)$-fertile}\label{kktable}
  \medskip
  
  \centering
\begin{tabular}{c|l}
  $k$& Knots that are $(k,k)$-fertile\\
  \hline
 6 & $0_1$, $3_1$, $4_1$, $5_2$\\
 7 & $0_1$, $3_1$\\
 8 & $0_1$, $3_1$, $4_1$, $5_2$\\
 9 & $0_1$, $3_1$\\
10 & $0_1$, $3_1$, $4_1$, $5_2$\\
\end{tabular}
\label{kkfertile}
\end{table}

There are likely many more interesting observations that can be made from staring at our data, but for now, we move on to discussing how our work fits in the context of the work done by others over the years.

\section{Other Knot Relations}\label{opp}

Our notion of descendant is not the first notion that attempts to capture what it means for a knot to contain another knot. For instance, the descendant relation shares some similarities with the notion of {\em predecessor}, introduced in \cite{diao}. 

\begin{definition} A knot $K_1$ is a {\bf predecessor} of knot $K_2$ if the crossing number of $K_1$ is less than that of $K_2$ and $K_1$ can be obtained from a minimal diagram of $K_2$ by a single crossing change.
\end{definition}

We see that, while the trefoil is a descendant of the figure-eight knot, it is not a predecessor since two crossing changes are required to derive a trefoil from a minimal diagram of the figure-eight knot. So if a knot $K_1$ is a predecessor of a knot $K_2$, then $K_1$ is a descendant of $K_2$, but the converse fails to hold in general.

In another effort to relate knots to their parts, Millett and Jablan described what it means for one knot to be a {\em subknot} of another knot~\cite{millett1}. In particular, they gave the following definition.

\begin{definition} A knot $K_1$ is a {\bf subknot} of knot $K_2$ if it can be obtained from a minimal diagram of $K_2$ by crossing changes that preserve those within a segment of the diagram and change those outside this segment so that the complementary segment is strictly ascending.
\end{definition}

\begin{figure}[!htbp] 
\begin{center}
\includegraphics[height=1.1in]{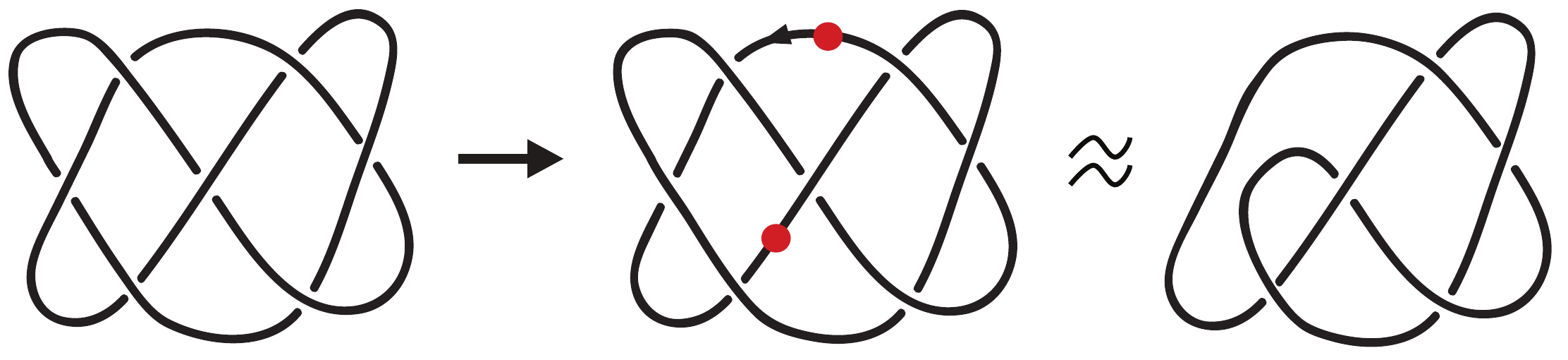} 
\end{center}
\vspace{-.3in}
\caption{The knot $5_2$ (right) is a subknot of the knot $7_4$ (left).}\label{subknot}
\end{figure}

In Figure~\ref{subknot}, we see that the twist knot $5_2$ is a subknot of $7_4$. Indeed, if crossing changes are performed on a segment of the knot $7_4$ between the two dots so that this segment is ascending, we find that knot $5_2$ is produced. 

While the subknot definition {\em seems} stricter than the descendant relation, is it possible that the subknot and descendant relations are actually the same relations in disguise? Just as with the predecessor relation, we immediately observe that if a knot $K_1$ is a subknot of knot $K_2$, then $K_1$ is a descendant of $K_2$. What about the converse? In \cite{millett2}, it was proven that the trefoil is {\em not} a subknot of $11a135$. On the other hand, we illustrated in Figure~\ref{parent_ex} that the trefoil is a descendant of $11a135$, so the descendant and subknot relations must be distinct.

Finally, another well-known relationship between knots is one that was introduced by Taniyama in \cite{taniyama}. 

\begin{definition} A knot $K_1$ is a {\bf minor} of knot $K_2$ if the set of knot shadows that have $K_2$ as a resolution is a subset of the set of shadows that have $K_1$ as a resolution.
\end{definition}

Once again, we notice that if $K_1$ is a minor of $K_2$, then $K_1$ is a descendant of $K_2$. Since all shadows of diagrams of $K_2$ are also shadows of diagrams of $K_1$ when $K_1$ is a minor of $K_2$, then in particular, a minimal crossing shadow of $K_2$ will have $K_1$ as a resolution. To determine if the minor relation is the same as the descendant relation, we need to ask about the converse. Our results from Section~\ref{computations} hold the key to answering this question. Our computations show that $7_3$ is a descendant of $8_{11}$. We also know that $8_{11}$ is a descendant of $10_{92}$, but $7_3$ is not a descendant of $10_{92}$---in fact, this trio of knots is one of our counterexamples to transitivity. More concretely, the shadow in Figure~\ref{minor_ex} contains $8_{11}$ but not $7_3$. So $7_3$ is not a minor of $8_{11}$. This proves that the descendant and minor relations are distinct.

\begin{figure}[!htbp] 
\begin{center}
\includegraphics[height=1.5in]{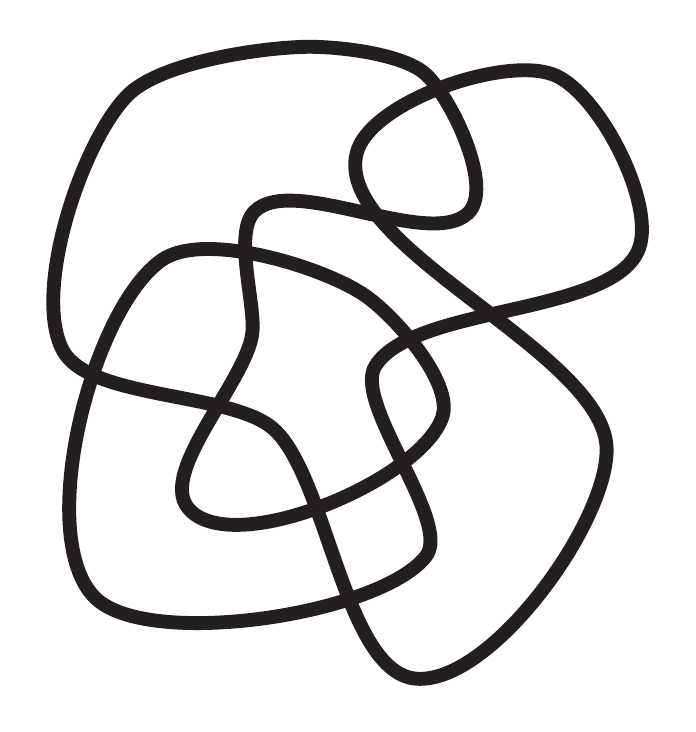} 
\end{center}
\vspace{-.3in}
\caption{A shadow of $10_{92}$ that contains $8_{11}$ but not $7_3$.}\label{minor_ex}
\end{figure}

While the descendant and minor relations are interestingly different, the fact that if $K_1$ is a minor of $K_2$, then $K_1$ is a descendant of $K_2$ yields the following result as a corollary of Theorem 1 in \cite{taniyama}.

\begin{corollary}\label{tref} Every nontrivial knot has the trefoil as a descendant.
\end{corollary}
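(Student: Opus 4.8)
The plan is to obtain the corollary as an immediate consequence of two facts, doing essentially no new work ourselves. The first fact is the implication recorded in the paragraph preceding the statement: if $K_1$ is a minor of $K_2$, then $K_1$ is a descendant of $K_2$. The second fact is Taniyama's Theorem~1 in \cite{taniyama}, which—rephrased in the vocabulary of this paper—asserts that the trefoil is a minor of every nontrivial knot. Combining these two gives the result in a single line.

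First I would unwind the minor-implies-descendant implication so that it is in hand in exactly the form needed here. Suppose $K_1$ is a minor of $K_2$, so that every shadow having $K_2$ as a resolution also has $K_1$ as a resolution. The shadow $P$ of a minimal crossing diagram of $K_2$ trivially has $K_2$ as a resolution, so $P$ has $K_1$ as a resolution as well; by definition this says $K_1$ is a descendant of $K_2$. This is precisely the observation already stated in the text, restated only to emphasize that it applies to the shadow coming from a minimal diagram.

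Next I would invoke Taniyama's Theorem~1. Its role is to supply, for an arbitrary nontrivial knot $K$, the assertion that the trefoil is a minor of $K$; equivalently, every knot projection that can be resolved to $K$ can also be resolved to a trefoil. Granting this, the corollary follows at once: for any nontrivial knot $K$, the trefoil is a minor of $K$ by Taniyama's theorem, hence a descendant of $K$ by the implication above.

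The only real care needed—and the step I would expect to be the main obstacle—lies in correctly translating Taniyama's Theorem~1 into the minor-relation language of this paper, since the direction of the associated partial order and the distinction between statements phrased in terms of diagrams versus the abstract minor relation are easy to get backwards. A secondary point is chirality: Taniyama's result may name the trefoil only up to mirror image, but this causes no difficulty, since by Proposition~\ref{mirror} a knot that is a parent of one chirality of the trefoil is automatically a parent of the other. A fully self-contained alternative would be to reprove the underlying projection fact directly—that every nontrivial reduced shadow admits a trefoil resolution—but since that is exactly the substance of Taniyama's theorem, citing it is the efficient route.
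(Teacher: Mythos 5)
Your proposal matches the paper's argument exactly: the corollary is obtained by combining Taniyama's Theorem~1 (the trefoil is a minor of every nontrivial knot) with the observation that the minor relation implies the descendant relation, which is precisely how the paper derives it. The added remark on chirality via Proposition~\ref{mirror} is a harmless extra precaution and does not change the approach.
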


From this result, we see in particular that the fertility number $F(K)$ for any nontrivial knot $K$ must be at least 3. We can also infer the following result. 

\begin{theorem} The trefoil is $(k,k)$-fertile for all $k\geq 3$. \end{theorem}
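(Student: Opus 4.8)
The plan is to reduce the statement to Corollary~\ref{tref} together with an elementary padding construction. By definition, the trefoil is $(k,k)$-fertile exactly when, for every prime knot $H$ with $cr(H)\le k$, there exists a $k$-crossing shadow having both $H$ and $3_1$ as resolutions. The first observation to record is that the smallest prime knot is the trefoil, so every such $H$ is nontrivial with $3\le cr(H)\le k$; this is precisely the hypothesis needed to invoke Corollary~\ref{tref} for each $H$ in turn.

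Next I would produce, for a fixed prime $H$, a shadow on exactly $cr(H)$ crossings that already contains both target knots. Since $H$ is nontrivial, Corollary~\ref{tref} guarantees that the trefoil is a descendant of $H$, i.e. there is a minimal crossing diagram $D$ of $H$ such that some choice of crossing changes on $D$ yields a diagram of $3_1$. Letting $S$ be the shadow of $D$, the assignment giving $D$ itself resolves $S$ to $H$, while the assignment after the crossing changes resolves $S$ to $3_1$. Thus $S$ is a shadow with $cr(H)$ crossings having both $H$ and $3_1$ among its resolutions. When $cr(H)=k$ this shadow already witnesses what we want; when $cr(H)<k$ I would insert $k-cr(H)$ nugatory Reidemeister~1 kinks along an arc of $S$ to obtain a shadow $S'$ with exactly $k$ crossings. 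The point is that adding a kink never removes a knot from the resolution set: given any resolution of $S$ producing a knot $J$, resolving the new kink crossings arbitrarily and then undoing them by R1 moves produces the same $J$. Applying this to the two resolutions giving $H$ and $3_1$ shows $S'$ still has both as resolutions, so $S'$ is the desired $k$-crossing witness.

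The argument is essentially mechanical once Corollary~\ref{tref} is available, so I do not expect a serious obstacle; the only step requiring any care is the bookkeeping in the padding construction, namely verifying that inserting kinks raises the crossing count by exactly the intended amount while preserving membership of both $H$ and $3_1$ in the resolution set. Ranging over all prime $H$ with $cr(H)\le k$ then yields, for every such $H$, a $k$-crossing shadow containing both $H$ and the trefoil, which is exactly the assertion that $3_1$ is $(k,k)$-fertile for all $k\ge 3$.
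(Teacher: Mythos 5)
Your proof is correct and follows essentially the route the paper intends: the paper states this theorem as an immediate inference from Corollary~\ref{tref}, and the kink-padding step you spell out is exactly the pseudo-R1 argument the paper uses in its proof of the analogous $(2j,2j)$-fertility theorem for $4_1$ and $5_2$. The only simplification in the trefoil case, which you correctly exploit, is that Corollary~\ref{tref} has no excluded knot families, so no separate case analysis is needed.
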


Similarly, we have the following three corollaries of Theorems 2, 3, and 4 in \cite{taniyama}. Each of these corollaries has further implications for bounds on fertility numbers.

\begin{corollary}\label{4_1fertility} If $K$ has a prime factor that is not equivalent to a $(2,p)$-torus knot with $p\geq 3$, then $K$ has the figure-eight knot as a descendant.
\end{corollary}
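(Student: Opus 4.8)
The plan is to deduce the corollary directly from Taniyama's classification of which knots admit the figure-eight knot as a minor, together with the implication—recorded just above in the discussion of the minor relation—that every minor of a knot is also a descendant of that knot.

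First I would isolate the key reduction: if $K_1$ is a minor of $K_2$, then $K_1$ is a descendant of $K_2$. This holds because a minimal crossing shadow of $K_2$ is, in particular, a shadow having $K_2$ as a resolution, hence by the definition of the minor relation it is also a shadow having $K_1$ as a resolution. Consequently it suffices to prove that, under the stated hypothesis on the prime factorization of $K$, the figure-eight knot $4_1$ is a minor of $K$.

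Second, I would invoke Theorem 2 of \cite{taniyama}, whose content is precisely a characterization of the knots having $4_1$ as a minor: $4_1$ is a minor of $K$ exactly when $K$ has a prime factor that is not a $(2,p)$-torus knot with $p\geq 3$. Applying this theorem to the hypothesis of the corollary yields immediately that $4_1$ is a minor of $K$, and then the reduction of the previous paragraph upgrades this to the conclusion that $4_1$ is a descendant of $K$.

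The only real content here lies in Taniyama's theorem, which we are citing rather than reproving; the work on our side is to confirm that the two hypotheses are phrased identically and that the minor relation is used in the correct direction (we need $4_1$ to be a minor of $K$, which produces $4_1$ as a \emph{descendant} of $K$, not the reverse). The one point I would check with care is the compatibility with connected sums: the phrase ``a prime factor that is not a $(2,p)$-torus knot'' asserts that a \emph{single} such summand suffices, so I would verify that the cited theorem indeed forces $4_1$ as a minor from the presence of just one non-torus prime factor rather than requiring every factor to be non-torus. This hypothesis-matching against the connected-sum structure is the subtle step, but it is built into Taniyama's statement and requires no independent argument on our part.
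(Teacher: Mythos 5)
Your proposal matches the paper's own (implicit) argument exactly: the paper states this as a corollary of Theorem 2 in Taniyama's work, obtained "similarly" to Corollary~\ref{tref} via the observation that if $K_1$ is a minor of $K_2$ then $K_1$ is a descendant of $K_2$ (since a minimal crossing shadow of $K_2$ must then also resolve to $K_1$). Your additional care about hypothesis-matching on connected sums is a reasonable sanity check but introduces nothing beyond what the paper relies on.
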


\begin{corollary}\label{5_2fertility} If $K$ has a prime factor that is not equivalent to a $(2,p)$-torus knot with $p\geq 3$ or the figure-eight knot, then $K$ has the knot $5_2$ as a descendant.
\end{corollary}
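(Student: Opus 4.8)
The plan is to obtain this statement as a direct corollary of the relevant theorem of Taniyama (Theorem~3 of~\cite{taniyama}), exactly as in the proofs of Corollary~\ref{tref} and Corollary~\ref{4_1fertility}, by routing through the implication---recorded in the paragraph preceding Corollary~\ref{tref}---that any minor of a knot is also a descendant of it. Taniyama's theorem provides an exact characterization of the knots of which $5_2$ is a minor: $5_2$ is a minor of $K$ precisely when the prime decomposition of $K$ contains a factor that is neither a $(2,p)$-torus knot with $p\geq 3$ nor the figure-eight knot. Since this is exactly the hypothesis of the corollary, the entire mathematical substance resides in Taniyama's result, and our task reduces to the one-line translation from ``minor'' to ``descendant.''

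Concretely, I would argue in two short steps. First, by hypothesis $K$ has a prime factor that is not equivalent to a $(2,p)$-torus knot with $p\geq 3$ or to the figure-eight knot, so Taniyama's theorem applies and $5_2$ is a minor of $K$; that is, every shadow admitting $K$ as a resolution also admits $5_2$ as a resolution. Second, the shadow of a minimal crossing diagram of $K$ is one such shadow, so it too resolves to $5_2$, and hence $5_2$ is a descendant of $K$. This second step is precisely the minor-implies-descendant observation already in hand and needs no further work.

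Since the hard content lives inside Taniyama's theorem, the only point on our side that demands care is the bookkeeping for connected sums: the corollary hypothesizes just a single ``good'' prime factor, so I would want to confirm that Taniyama's statement is itself phrased in terms of the prime decomposition of the whole knot $K$, guaranteeing that the presence of one prime factor outside the excluded family $\{4_1\}\cup\{T_{2,p}: p\geq 3\}$ already forces $5_2$ to be a minor of $K$. I expect this to be the case, rendering the corollary immediate. Were Taniyama's theorem available only for prime knots, the remaining and genuinely delicate task would be to promote the relation from a prime factor $J$ to the composite $K=J\# L$---delicate precisely because, as noted after the composite-knots proposition, a minimal diagram of a composite knot need not be a connect sum of minimal diagrams of its factors. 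I regard matching our hypothesis to Taniyama's exact formulation, and handling this connect-sum issue if it arises, as the main (and essentially only) obstacle.
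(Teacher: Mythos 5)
Your proposal matches the paper's approach exactly: the paper states this as an immediate corollary of Taniyama's Theorem 3 combined with the observation (made just before Corollary~\ref{tref}) that a minor of $K$ is a descendant of $K$, and gives no further argument. Your concern about the prime-factor phrasing is well placed but resolves as you expect, since Taniyama's characterization is stated in terms of the prime decomposition of $K$, which is why the paper's corollary carries that hypothesis verbatim.
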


\begin{corollary} If $K$ has a prime factor that is not equivalent to any of the pretzel knots $L(p_1,p_2,p_3)$ (where $p_1$, $p_2$, and $p_3$ are odd integers), then $K$ has the knot $5_1$ as a descendant.
\end{corollary}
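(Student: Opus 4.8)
The plan is to deduce this statement in exactly the same way as Corollaries~\ref{4_1fertility} and~\ref{5_2fertility}: by combining the implication ``minor $\Rightarrow$ descendant'' recorded above with the appropriate classification theorem of Taniyama, here Theorem~4 in~\cite{taniyama}. That theorem characterizes which knots have $5_1$ as a minor, and the hypothesis on $K$ in the statement is designed to match its conclusion exactly, so the whole corollary should amount to a translation of Taniyama's result into the descendant language.

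Concretely, I would proceed as follows. First, I would quote Taniyama's Theorem~4 to the effect that $5_1$ fails to be a minor of a knot $K$ precisely when every prime factor of $K$ is a pretzel knot $L(p_1,p_2,p_3)$ with $p_1, p_2, p_3$ odd; equivalently, the moment $K$ admits at least one prime factor not of this form, $5_1$ is a minor of $K$. This is exactly the hypothesis we are given. Second, I would invoke the general fact established earlier in this section: if $K_1$ is a minor of $K_2$, then the shadow of any minimal diagram of $K_2$ lies in the shadow set of $K_2$ and therefore also in the shadow set of $K_1$, so that this minimal shadow resolves to $K_1$ and hence $K_1$ is a descendant of $K_2$. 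Applying this with $K_1 = 5_1$ and $K_2 = K$ yields the desired conclusion that $5_1$ is a descendant of $K$.

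The only delicate point---and the one I would want to check carefully against the precise statement in~\cite{taniyama}---is whether Theorem~4 is phrased directly in terms of prime factors or only for prime knots. If it is stated only for prime knots, then an additional connected-sum step is required: one must verify that if $5_1$ is a minor of some prime factor $F$ of $K$, then $5_1$ remains a minor of the composite $K$. This compatibility of the minor relation with connected sum is the single genuinely substantive ingredient, and it is the same step that underlies the parallel conclusions of Corollaries~\ref{4_1fertility} and~\ref{5_2fertility}; once it is in hand, the pretzel-knot corollary follows with no further work.
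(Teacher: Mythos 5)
Your proposal matches the paper's own (implicit) argument exactly: the paper derives this corollary by combining Taniyama's Theorem~4 with the observation that the minor relation implies the descendant relation, just as you describe. Your additional caution about how Taniyama's theorem handles prime factors versus composite knots is a reasonable point of care, but it does not change the approach, which is the same as the paper's.
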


Corollaries~\ref{4_1fertility} and \ref{5_2fertility} have a particularly nice consequence for $(k,k)$-fertility that puts the data in Table \ref{kktable} into a larger context. We can use these corollaries to prove the following result.

\begin{theorem} The figure-eight knot, $4_1$, and the $T_3$ twist knot, $5_2$, are both $(2j,2j)$-fertile for any integer $j\geq 3$.
\end{theorem}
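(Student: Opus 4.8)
The plan is to reformulate the goal and then isolate the one genuinely hard family. By definition, proving that $4_1$ (resp.\ $5_2$) is $(2j,2j)$-fertile means producing, for each \emph{prime} knot $H$ with $\mathrm{cr}(H)\le 2j$, a single shadow on \emph{exactly} $2j$ crossings having both $H$ and $4_1$ (resp.\ $5_2$) among its resolutions. The workhorse is a padding move: a nugatory Reidemeister~1 kink is removable no matter how its crossing is resolved, so if a shadow on $c\le 2j$ crossings already resolves to both $H$ and the target, then attaching $2j-c$ kinks produces a $2j$-crossing shadow with the same two resolutions. Hence it suffices to find, for each $H$, \emph{some} shadow on at most $2j$ crossings resolving to both $H$ and the target.

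First I would dispatch every $H$ for which the target is already a descendant. If $H$ is prime and not a $(2,p)$-torus knot, then by Corollary~\ref{4_1fertility} the knot $4_1$ is a descendant of $H$, so the shadow of a minimal diagram of $H$ resolves to both $H$ and $4_1$, and padding finishes the case; Corollary~\ref{5_2fertility} does the same for $5_2$. The handful of small exceptions is covered by Theorem~\ref{twist}: since $3_1=T_1$ and $4_1=T_2$ are descendants of $5_2=T_3$, and $3_1=T_1$ is a descendant of $4_1=T_2$, a minimal diagram of the target itself supplies the shadow. Thus the only remaining $H$ are the $(2,p)$-torus knots with $p\ge 5$. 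For such $H=T_{2,p}$ the target is \emph{not} a descendant (Theorems~\ref{twist} and~\ref{2ptorus}), so the shadow cannot come from a diagram of $H$ alone. Here I would use a connect-sum shadow: because every shadow resolves to the unknot, the shadow formed from a minimal diagram of $T_{2,p}$ connect-summed with a minimal diagram of the target resolves to $T_{2,p}$ (send the target summand to the unknot) and to the target (send the torus summand to the unknot). This shadow has $p+4$ (resp.\ $p+5$) crossings, and padding reaches $2j$ precisely when $p\le 2j-5$.

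This leaves only the two largest torus knots, $T_{2,2j-1}$ and $T_{2,2j-3}$, and these are the crux. The obstruction is a parity clash: a torus knot $T_{2,p}$ has odd crossing number while $4_1$ and the ambient shadow are even, and any connect-sum or rational (double-twist or pretzel) shadow containing both must spend at least $(2j-1)+4>2j$ crossings, so no such economical shadow fits. I would therefore construct an explicit \emph{prime} $2j$-crossing shadow for each of these two families by induction on $j$. The base cases are exactly the computational assertions recorded in Table~\ref{kktable}: that $4_1$ and $5_2$ are $(6,6)$-, $(8,8)$-, and $(10,10)$-fertile already exhibits shadows pairing the relevant top torus knots ($5_1$, $7_1$, $9_1$) with the targets.

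For the inductive step I would distinguish a two-strand torus-braid window inside such a shadow and insert a two-crossing clasp into it, designed so that one resolution lengthens the torus braid---carrying $T_{2,2j-1}$ to $T_{2,2j+1}$---while the opposite resolution turns the two inserted crossings into a cancelling Reidemeister~2 bigon, thereby preserving the figure-eight (resp.\ $5_2$) resolution; the analogous induction seeded one level higher handles the $T_{2,2j-3}$ family. The main obstacle I anticipate is precisely this step: choosing a single window in which the two new crossings simultaneously extend the braid in the torus resolution and cancel in the target resolution, and verifying that the resulting object is a legitimate $(2j+2)$-crossing shadow carrying both prescribed resolutions. Everything outside this construction reduces to Corollaries~\ref{4_1fertility} and~\ref{5_2fertility} together with the routine kink- and connect-sum padding described above.
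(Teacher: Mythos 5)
Your reduction of the problem is sound and largely parallels the paper's: the pseudo-R1 padding of a smaller shadow up to exactly $2j$ crossings is precisely the paper's Case~1 device, and invoking Corollaries~\ref{4_1fertility} and \ref{5_2fertility} to dispatch every prime $H$ that is not a $(2,p)$-torus knot is the paper's first step verbatim. Your connect-sum shadow for the torus knots with $p\le 2j-5$ is a legitimate alternative to part of what the paper does (the paper instead uses one uniform $2j$-crossing shadow for \emph{all} odd $p<2j$), and it works for the reason you give: every shadow resolves to the unknot, so either summand can be killed.

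The genuine gap is exactly where you flag it: the cases $H=T_{2,2j-1}$ and $H=T_{2,2j-3}$ are never actually handled. You describe an induction on $j$ whose inductive step inserts a two-crossing clasp that is supposed to extend the torus braid in one resolution and cancel by a Reidemeister~2 move in the resolution giving $4_1$ (resp.\ $5_2$), but you neither specify where in the shadow this window sits nor verify that both required resolutions survive the insertion---and you explicitly name this as the anticipated obstacle rather than overcoming it. This step is not routine; it is the entire content of the theorem, since everything else follows from Taniyama's results plus padding. The paper's proof supplies precisely the missing object: an explicit one-parameter family of $2j$-crossing shadows (Figure~\ref{kk_pf}(a)) together with explicit resolutions yielding $T_{2,2j-1}$ (and, by the Reidemeister~2 reduction argument of Theorem~\ref{2ptorus}, every $T_{2,p}$ with $p$ odd and $0<p<2j-1$), $4_1$, and $5_2$. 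Without an analogous explicit construction, or a fully verified inductive step, your argument does not establish the theorem for the two torus knots closest to the crossing bound, and hence does not establish $(2j,2j)$-fertility.
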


\begin{proof} 
Let us fix an integer $j\geq 3$. Our goal is to show: for every
prime knot with
crossing number less than or equal to $2j$, there is a knot shadow
with $2j$ crossings that has
$4_1$ (resp. $5_2$) as a descendant.  We prove the result for $4_1$.
The proof for $5_2$ is similar.

Let $K$ be any knot with $cr(K)\leq 2j$.  By Corollary \ref{4_1fertility}, the only prime knots
that fail to have $4_1$ as a descendant are $(2,p)$ torus knots.

Case 1: $K$ is not a $(2,p)$ torus knot.
We know that $4_1$ is a descendant of $K$, i.e. there is a
$cr(K)$-crossing shadow $S$ that can
be resolved to both $4_1$ and $K$.
If $cr(K)=2j$, then we are done.  If $cr(K)<2j$, then we can add
$n=2j-cr(K)$ crossings by applying pseudo-R1 moves to $S$ to
obtain a $2j$-crossing shadow which resolves to both $4_1$ and $K$.

Case 2: $K$ is a $(2,p)$ torus knot.
In Figure 11, we provide a $2j$-crossing shadow that can be resolved
to produce $4_1$, $5_2$, and all
$(2,p)$ torus knots with $p$ odd, $0<p<2j$, which suffices to prove
the theorem. We illustrate in subdiagram (b) that the $(2,2j-1)$ torus knot can be derived from the shadow pictured in (a). By an argument similar to the one that proved Theorem~\ref{2ptorus}, we can also derive from this shadow every $(2,p)$ torus knot with $0<p<2j-1$, where $p$ is an odd integer. Subdiagrams (c) and (d) illustrate our desired resolutions of the $4_1$ and $5_2$ knots, respectively. 
\end{proof}

\begin{figure}[!htbp] 
\begin{center}
\includegraphics[height=1.8in]{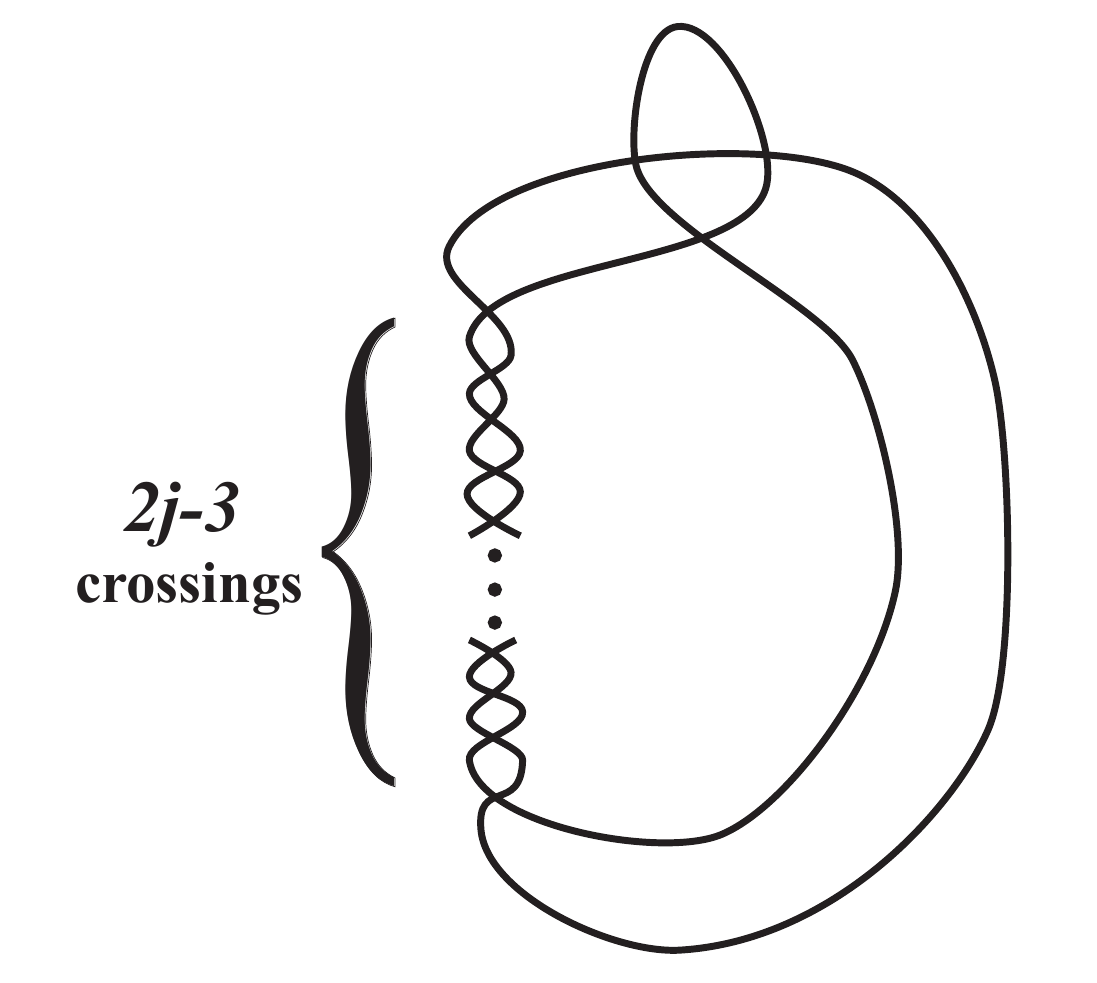}\\
\hspace{.5in}(a)\\
\medskip

\includegraphics[height=1.75in]{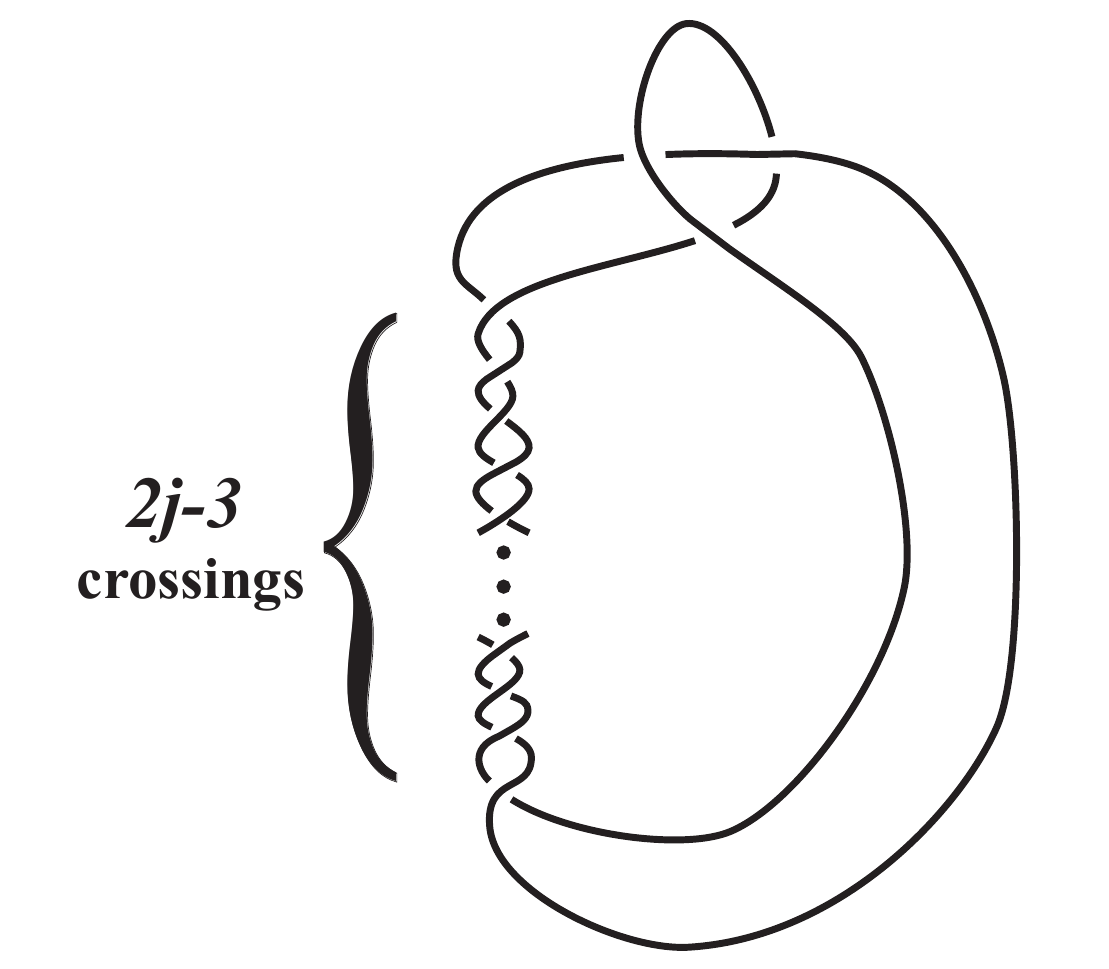} 
\includegraphics[height=1.8in]{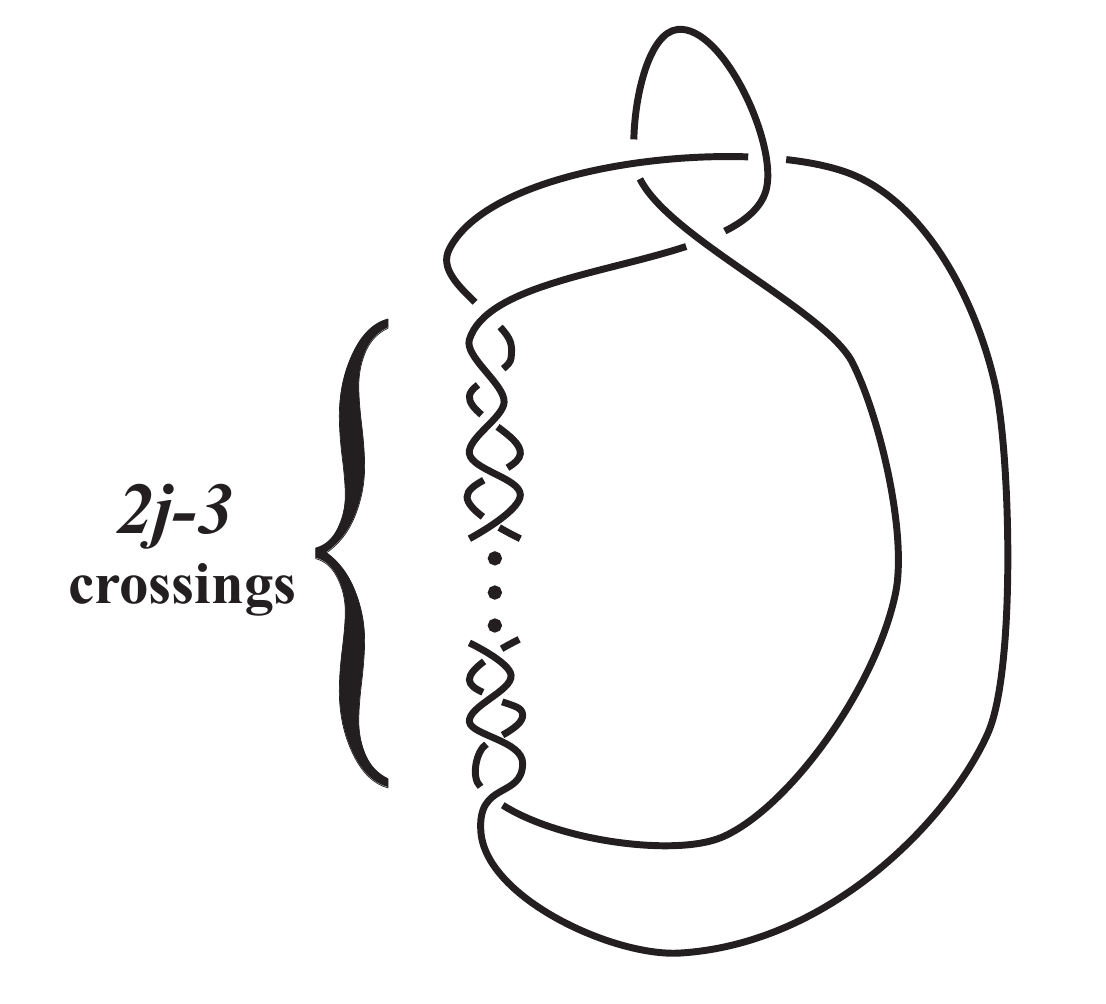}\includegraphics[height=1.8in]{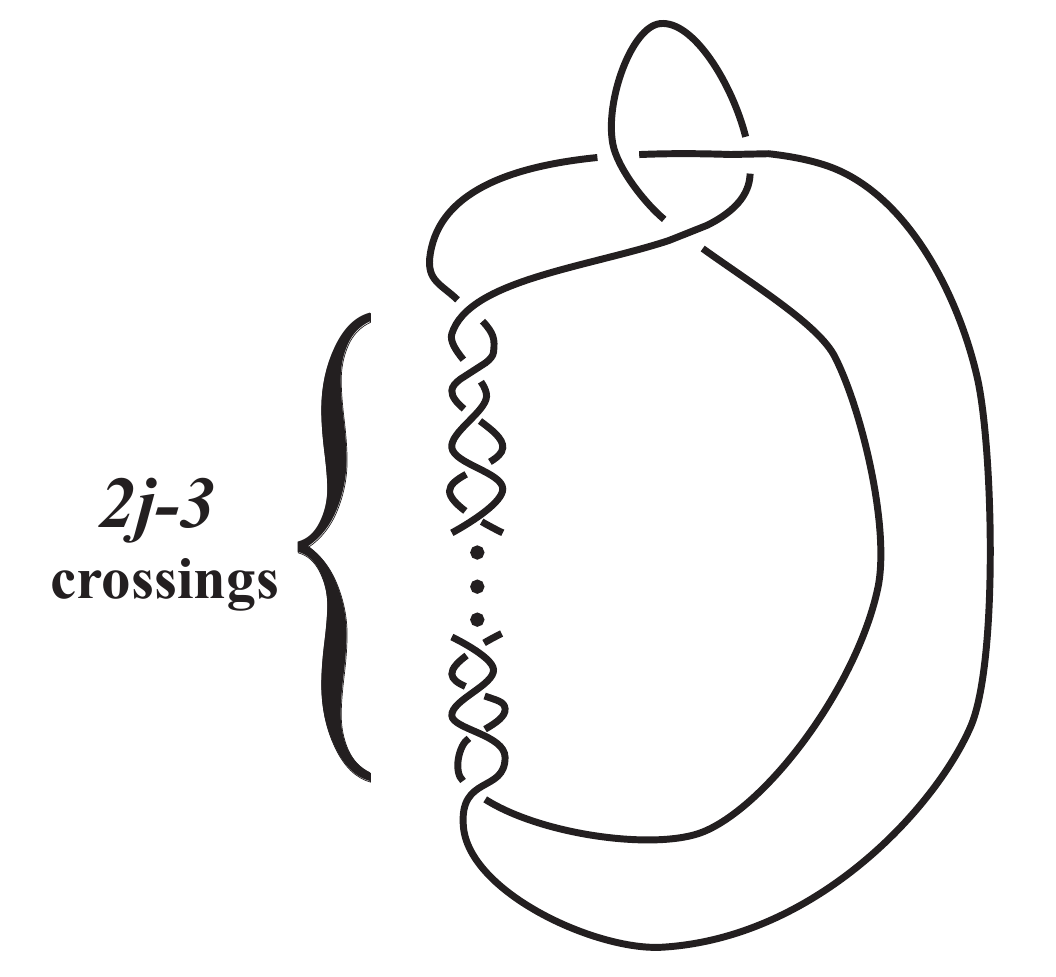} \\
\hspace{.5in}(b)\hspace{1.8in}(c)\hspace{1.8in}(d)
\end{center}

\caption{Subfigure (a) shows a shadow with $2j$ crossings, (b) gives a resolution that produces a diagram of the $(2,2j-1)$ torus knot, (c) shows a resolution to a diagram of $4_1$, and (d) shows a diagram of $5_2$.}\label{kk_pf}
\end{figure}

\section{Conclusion}\label{conclusion}

We conclude with some of our favorite lingering questions and ideas. 

\begin{enumerate}
\item Are there fertile knots with crossing number greater than seven?

It seems highly unlikely that, if there are no fertile knots with crossing number 8, 9, or 10, there will suddenly appear a fertile knot with crossing number greater than 10. We have not conclusively ruled this possibility out, however. How could one prove that no more fertile knots exist?

\item Can the results of Section \ref{families} be generalized? In other words, what more can we say about our favorite families of knots?

We have yet to explore other families of rational knots, pretzel knots, or Montesinos knots. In general, are there similar descendant relationships between families of knots other than twist knots and $(2,p)$ torus knots?

\item For an arbitrary $n$-crossing knot $K$, what proportion of $m$-crossing knots (where $m>n$) should we expect to be parents of $K$? 

For example, each alternating 8-crossing knot {\em fails} to be a descendant of between 33 and 41 of the 9-crossing alternating knots. In other words, each alternating 8-crossing knot has at most eight 9-crossing parents. Of the 123 10-crossing alternating knots, any given alternating 8-crossing knot will fail to be a descendant of between 73 and 116. 

\item What more can we say about barriers to fertility?

For instance, for a given $n$, what is the smallest set,  $\mathcal{S}$, of alternating $n$-crossing knots which has the property that no alternating $(n+1)$-crossing knot has all members of $\mathcal{S}$ as descendants?

\item What more can we say about  $(n,m)$-fertility in general and $(k,k)$-fertility in particular?

We saw that $3_1$, $4_1$, and $5_2$ are $(k,k)$-fertile for infinitely many $k$. Are there other knot types that are $(k,k)$-fertile for all sufficiently large $k$ or all sufficiently large odd or even $k$?

\end{enumerate}

\section*{Acknowledgments}

First, we would like to thank Ken Millett, Erica Flapan, Claus Ernst, Harrison Chapman, and Colin Adams for their suggestions that led to improvements of this paper. We express our deep gratitude to the National Science Foundation for supporting this work through grants DMS \#1460537 and \#1418869. This work was also supported by a grant from the Simons Foundation (\#426566, Allison Henrich) as well as the Clare Boothe Luce Foundation and the Seattle University College of Science and Engineering. Finally, this project was inspired by an observation of Inga Johnson's. We thank her for the inspiration.

\bibliography{revisionv5-final}
\bibliographystyle{plain}

%\section*{Appendix}

\begin{table}
  \caption{Fertility levels for knot types through 10 crossings}
  \medskip
  
  \centering
\begin{tabular}{c|c||c|c||c|c||c|c||c|c||c|c||c|c}
  $K$ & $F$ & $K$ & $F$ & $K$ & $F$ & $K$ & $F$ & $K$ & $F$ & $K$ & $F$ & $K$ & $F$\\
  \hline
$3_1$ & 3 &          $9_{1}$ & 3 &    $9_{41}$ & 5 &         $10_{26}$ & 7 &   $10_{66}$ & 6 &   $10_{106}$ & 5 &  $10_{146}$ & 7\\     
$4_1$ & 4 &          $9_{2}$ & 4 &    $9_{42}$ & 6 &         $10_{27}$ & 6 &   $10_{67}$ & 6 &   $10_{107}$ & 6 &  $10_{147}$ & 7\\     
$5_1$ & 3 &          $9_{3}$ & 5 &    $9_{43}$ & 6 &         $10_{28}$ & 6 &   $10_{68}$ & 6 &   $10_{108}$ & 6 &  $10_{148}$ & 6\\     
$5_2$ & 4 &          $9_{4}$ & 5 &    $9_{44}$ & 6 &         $10_{29}$ & 6 &   $10_{69}$ & 7 &   $10_{109}$ & 5 &  $10_{149}$ & 6\\     
$6_1$ & 4 &          $9_{5}$ & 4 &    $9_{45}$ & 6 &         $10_{30}$ & 6 &   $10_{70}$ & 6 &   $10_{110}$ & 6 &  $10_{150}$ & 6\\     
$6_2$ & 5 &          $9_{6}$ & 5 &    $9_{46}$ & 6 &         $10_{31}$ & 6 &   $10_{71}$ & 6 &   $10_{111}$ & 6 &  $10_{151}$ & 6\\     
$6_3$ & 5 &          $9_{7}$ & 6 &    $9_{47}$ & 6 &         $10_{32}$ & 6 &   $10_{72}$ & 6 &   $10_{112}$ & 5 &  $10_{152}$ & 5\\     
$3_1\#3_1$ & 3 &     $9_{8}$ & 6 &    $9_{48}$ & 6 &         $10_{33}$ & 6 &   $10_{73}$ & 6 &   $10_{113}$ & 6 &  $10_{153}$ & 6\\     
$7_{1}$ & 3 &        $9_{9}$ & 5 &    $9_{49}$ & 6 &         $10_{34}$ & 6 &   $10_{74}$ & 5 &   $10_{114}$ & 6 &  $10_{154}$ & 6\\     
$7_{2}$ & 4 &        $9_{10}$ & 5 &   $3_1\#6_1$ & 4 &       $10_{35}$ & 6 &   $10_{75}$ & 5 &   $10_{115}$ & 6 &  $10_{155}$ & 5\\     
$7_{3}$ & 5 &        $9_{11}$ & 6 &   $3_1\#6_2$ & 5 &       $10_{36}$ & 6 &   $10_{76}$ & 6 &   $10_{116}$ & 5 &  $10_{156}$ & 6\\     
$7_{4}$ & 4 &        $9_{12}$ & 6 &   $3_1\#6_3$ & 5 &       $10_{37}$ & 6 &   $10_{77}$ & 6 &   $10_{117}$ & 6 &  $10_{157}$ & 5\\     
$7_{5}$ & 5 &        $9_{13}$ & 6 &   $4_1\#5_1$ & 4 &       $10_{38}$ & 6 &   $10_{78}$ & 6 &   $10_{118}$ & 5 &  $10_{158}$ & 6\\     
$7_{6}$ & 6 &        $9_{14}$ & 5 &   $4_1\#5_2$ & 4 &       $10_{39}$ & 7 &   $10_{79}$ & 5 &   $10_{119}$ & 6 &  $10_{159}$ & 5\\     
$7_{7}$ & 5 &        $9_{15}$ & 6 &   $3_1\#3_1\#3_1$ & 3 &  $10_{40}$ & 7 &   $10_{80}$ & 6 &   $10_{120}$ & 6 &  $10_{160}$ & 6\\     
$3_{1}\#4_{1}$ & 4 & $9_{16}$ & 5 &   $10_{1}$ & 4 &         $10_{41}$ & 7 &   $10_{81}$ & 6 &   $10_{121}$ & 6 &  $10_{161}$ & 7\\     
$8_{1}$ & 4 &        $9_{17}$ & 5 &   $10_{2}$ & 5 &         $10_{42}$ & 7 &   $10_{82}$ & 5 &   $10_{122}$ & 6 &  $10_{162}$ & 6\\     
$8_{2}$ & 5 &        $9_{18}$ & 6 &   $10_{3}$ & 4 &         $10_{43}$ & 6 &   $10_{83}$ & 6 &   $10_{123}$ & 5 &  $10_{163}$ & 6\\     
$8_{3}$ & 4 &        $9_{19}$ & 6 &   $10_{4}$ & 5 &         $10_{44}$ & 7 &   $10_{84}$ & 6 &   $10_{124}$ & 5 &  $10_{164}$ & 6\\     
$8_{4}$ & 5 &        $9_{20}$ & 6 &   $10_{5}$ & 5 &         $10_{45}$ & 7 &   $10_{85}$ & 5 &   $10_{125}$ & 6 &  $10_{165}$ & 6\\     
$8_{5}$ & 5 &        $9_{21}$ & 6 &   $10_{6}$ & 6 &         $10_{46}$ & 5 &   $10_{86}$ & 6 &   $10_{126}$ & 6 &  $3_1\#7_1$ & 3\\     
$8_{6}$ & 6 &        $9_{22}$ & 6 &   $10_{7}$ & 5 &         $10_{47}$ & 5 &   $10_{87}$ & 6 &   $10_{127}$ & 6 &  $3_1\#7_2$ & 4\\     
$8_{7}$ & 5 &        $9_{23}$ & 6 &   $10_{8}$ & 5 &         $10_{48}$ & 5 &   $10_{88}$ & 6 &   $10_{128}$ & 6 &  $3_1\#7_3$ & 5\\     
$8_{8}$ & 6 &        $9_{24}$ & 6 &   $10_{9}$ & 5 &         $10_{49}$ & 6 &   $10_{89}$ & 6 &   $10_{129}$ & 6 &  $3_1\#7_4$ & 4\\     
$8_{9}$ & 5 &        $9_{25}$ & 6 &   $10_{10}$ & 6 &        $10_{50}$ & 6 &   $10_{90}$ & 6 &   $10_{130}$ & 6 &  $3_1\#7_5$ & 5\\     
$8_{10}$ & 5 &       $9_{26}$ & 6 &   $10_{11}$ & 6 &        $10_{51}$ & 6 &   $10_{91}$ & 5 &   $10_{131}$ & 6 &  $3_1\#7_6$ & 6\\     
$8_{11}$ & 5 &       $9_{27}$ & 7 &   $10_{12}$ & 6 &        $10_{52}$ & 6 &   $10_{92}$ & 6 &   $10_{132}$ & 6 &  $3_1\#7_7$ & 5\\     
$8_{12}$ & 6 &       $9_{28}$ & 6 &   $10_{13}$ & 6 &        $10_{53}$ & 6 &   $10_{93}$ & 6 &   $10_{133}$ & 6 &  $4_1\#6_1$ & 4\\     
$8_{13}$ & 6 &       $9_{29}$ & 6 &   $10_{14}$ & 6 &        $10_{54}$ & 6 &   $10_{94}$ & 5 &   $10_{134}$ & 6 &  $4_1\#6_2$ & 5\\     
$8_{14}$ & 6 &       $9_{30}$ & 6 &   $10_{15}$ & 6 &        $10_{55}$ & 6 &   $10_{95}$ & 6 &   $10_{135}$ & 6 &  $4_1\#6_3$ & 5\\     
$8_{15}$ & 6 &       $9_{31}$ & 6 &   $10_{16}$ & 5 &        $10_{56}$ & 6 &   $10_{96}$ & 6 &   $10_{136}$ & 6 &  $5_1\#5_1$ & 3\\     
$8_{16}$ & 5 &       $9_{32}$ & 6 &   $10_{17}$ & 5 &        $10_{57}$ & 6 &   $10_{97}$ & 6 &   $10_{137}$ & 6 &  $5_1\#5_2$ & 5\\     
$8_{17}$ & 5 &       $9_{33}$ & 6 &   $10_{18}$ & 6 &        $10_{58}$ & 6 &   $10_{98}$ & 6 &   $10_{138}$ & 6 &  $5_2\#5_2$ & 4\\     
$8_{18}$ & 5 &       $9_{34}$ & 6 &   $10_{19}$ & 6 &        $10_{59}$ & 6 &   $10_{99}$ & 5 &   $10_{139}$ & 5 &  $3_1\#3_1\#4_1$ & 4\\
$8_{19}$ & 5 &       $9_{35}$ & 4 &   $10_{20}$ & 6 &        $10_{60}$ & 6 &   $10_{100}$ & 5 &  $10_{140}$ & 6 &  & \\
$8_{20}$ & 6 &       $9_{36}$ & 6 &   $10_{21}$ & 5 &        $10_{61}$ & 5 &   $10_{101}$ & 6 &  $10_{141}$ & 6 &  & \\
$8_{21}$ & 6 &       $9_{37}$ & 5 &   $10_{22}$ & 6 &        $10_{62}$ & 5 &   $10_{102}$ & 6 &  $10_{142}$ & 6 &  & \\
$3_1\#5_1$ & 3 &     $9_{38}$ & 6 &   $10_{23}$ & 7 &        $10_{63}$ & 6 &   $10_{103}$ & 6 &  $10_{143}$ & 6 &  & \\
$3_1\#5_2$ & 4 &     $9_{39}$ & 6 &   $10_{24}$ & 6 &        $10_{64}$ & 5 &   $10_{104}$ & 5 &  $10_{144}$ & 6 &  & \\
$4_1\#4_1$ & 4 &     $9_{40}$ & 5 &   $10_{25}$ & 7 &        $10_{65}$ & 6 &   $10_{105}$ & 6 &  $10_{145}$ & 6 &  & \\
\end{tabular}
\label{fertilitytable}
\end{table}

\begin{table}
  \caption{Table of siblings through 8- and 9-crossing knot types}
  \bigskip
  
  \centering
\begin{tabular}{c||l}
  knot & siblings\\
  \hline
$8_{5}$ & $8_{19}$ $8_{20}$\\
$8_{10}$ & $8_{19}$ $8_{20}$ $8_{21}$\\
$8_{15}$ & $8_{20}$ $8_{21}$\\
$8_{16}$ & $8_{19}$ $8_{20}$ $8_{21}$\\
$8_{17}$ & $8_{19}$ $8_{20}$ $8_{21}$\\
$8_{18}$ & $8_{19}$ $8_{20}$\\
$8_{19}$ & $8_{5}$ $8_{10}$ $8_{16}$ $8_{17}$ $8_{18}$ $8_{20}$ $8_{21}$\\
$8_{20}$ & $8_{5}$ $8_{10}$ $8_{15}$ $8_{16}$ $8_{17}$ $8_{18}$ $8_{19}$ $8_{21}$\\
$8_{21}$ & $8_{10}$ $8_{15}$ $8_{16}$ $8_{17}$ $8_{19}$ $8_{20}$\\
\hline
$9_{22}$ & $9_{42}$ $9_{43}$ $9_{45}$\\
$9_{25}$ & $9_{42}$ $9_{44}$ $9_{45}$\\
$9_{29}$ & $9_{42}$ $9_{43}$ $9_{44}$ $9_{46}$\\
$9_{30}$ & $9_{43}$ $9_{44}$ $9_{45}$\\
$9_{32}$ & $9_{42}$ $9_{43}$ $9_{44}$ $9_{45}$\\
$9_{33}$ & $9_{42}$ $9_{43}$ $9_{44}$ $9_{45}$\\
$9_{34}$ & $9_{42}$ $9_{43}$ $9_{44}$ $9_{46}$ $9_{47}$\\
$9_{35}$ & $9_{46}$\\
$9_{36}$ & $9_{42}$ $9_{43}$ $9_{44}$\\
$9_{37}$ & $9_{46}$ $9_{48}$\\
$9_{38}$ & $9_{42}$ $9_{44}$ $9_{45}$ $9_{48}$\\
$9_{39}$ & $9_{42}$ $9_{44}$ $9_{46}$ $9_{48}$ $9_{49}$\\
$9_{40}$ & $9_{42}$ $9_{46}$ $9_{47}$\\
$9_{41}$ & $9_{42}$ $9_{46}$ $9_{49}$\\
$9_{42}$ & $9_{22}$ $9_{25}$ $9_{29}$ $9_{32}$ $9_{33}$ $9_{34}$ $9_{36}$ $9_{38}$ $9_{39}$ $9_{40}$ $9_{41}$ $9_{43}$ $9_{44}$ $9_{45}$ $9_{46}$ $9_{47}$ $9_{48}$ $9_{49}$\\
$9_{43}$ & $9_{22}$ $9_{29}$ $9_{30}$ $9_{32}$ $9_{33}$ $9_{34}$ $9_{36}$ $9_{42}$ $9_{44}$ $9_{45}$ $9_{46}$ $9_{47}$\\
$9_{44}$ & $9_{25}$ $9_{29}$ $9_{30}$ $9_{32}$ $9_{33}$ $9_{34}$ $9_{36}$ $9_{38}$ $9_{39}$ $9_{42}$ $9_{43}$ $9_{45}$ $9_{46}$ $9_{47}$ $9_{48}$ $9_{49}$\\
$9_{45}$ & $9_{22}$ $9_{25}$ $9_{30}$ $9_{32}$ $9_{33}$ $9_{38}$ $9_{42}$ $9_{43}$ $9_{44}$ $9_{48}$\\
$9_{46}$ & $9_{29}$ $9_{34}$ $9_{35}$ $9_{37}$ $9_{39}$ $9_{40}$ $9_{41}$ $9_{42}$ $9_{43}$ $9_{44}$ $9_{47}$ $9_{48}$ $9_{49}$\\
$9_{47}$ & $9_{34}$ $9_{40}$ $9_{42}$ $9_{43}$ $9_{44}$ $9_{46}$\\
$9_{48}$ & $9_{37}$ $9_{38}$ $9_{39}$ $9_{42}$ $9_{44}$ $9_{45}$ $9_{46}$ $9_{49}$\\
$9_{49}$ & $9_{39}$ $9_{41}$ $9_{42}$ $9_{44}$ $9_{46}$ $9_{48}$\\
\end{tabular}
\label{siblingstable}
\end{table}

%%%%%%%%(m,3) fertility%%%%%%%%
\begin{table}
  \caption{Maximal $m$ for $(m,3)$-fertility in knot types through 3 crossings}
    \bigskip
    
  \centering
  \begin{tabular}{c|c||c|c}
    $K$ & $m$ & $K$ & $m$ \\
    \hline
    $0_{1}$ & 3 & $3_{1}$ & 3 \\
  \end{tabular}
  \label{m3fertilitytable}
\end{table}

%%%%%%%%(m,4) fertility%%%%%%%%
\begin{table}
  \caption{Maximal $m$ for $(m,4)$-fertility in knot types through 4 crossings}
    \bigskip
    
  \centering
  \begin{tabular}{c|c||c|c||c|c}
    $K$ & $m$ & $K$ & $m$ & $K$ & $m$ \\
    \hline
    $0_{1}$ & 4 & $3_{1}$ & 4 & $4_{1}$ & 4 \\
  \end{tabular}
  \label{m4fertilitytable}
\end{table}

%%%%%%%%(m,5) fertility%%%%%%%%
\begin{table}
  \caption{Maximal $m$ for $(m,5)$-fertility in knot types through 5 crossings}
    \bigskip
    
  \centering
  \begin{tabular}{c|c||c|c||c|c||c|c||c|c}
    $K$ & $m$ & $K$ & $m$ & $K$ & $m$ & $K$ & $m$ & $K$ & $m$ \\
    \hline
    $0_{1}$ & 5 & $3_{1}$ & 5 & $4_{1}$ & 4 & $5_{1}$ & 3 & $5_{2}$ & 4 \\
  \end{tabular}
  \label{m5fertilitytable}
\end{table}

%%%%%%%%(m,6) fertility%%%%%%%%
\begin{table}
  \caption{Maximal $m$ for $(m,6)$-fertility in knot types through 6 crossings}
    \bigskip
    
  \centering
  \begin{tabular}{c|c||c|c||c|c||c|c||c|c}
    $K$ & $m$ & $K$ & $m$ & $K$ & $m$ & $K$ & $m$ & $K$ & $m$ \\
    \hline
    $0_{1}$ & 6 & $4_{1}$ & 6 & $5_{2}$ & 6 & $6_{2}$ & 5 & $3_1\#3_1$ & 3 \\
    $3_{1}$ & 6 & $5_{1}$ & 5 & $6_{1}$ & 4 & $6_{3}$ & 5 & & \\
  \end{tabular}
  \label{m6fertilitytable}
\end{table}

%%%%%%%%(m,7) fertility%%%%%%%%
\begin{table}
  \caption{Maximal $m$ for $(m,7)$-fertility in knot types through 7 crossings}
    \bigskip
    
  \centering
  \begin{tabular}{c|c||c|c||c|c||c|c||c|c||c|c}
    $K$ & $m$ & $K$ & $m$ & $K$ & $m$ & $K$ & $m$ & $K$ & $m$ & $K$ & $m$ \\
    \hline
    $0_{1}$ & 7 & $5_{1}$ & 6 & $6_{2}$ & 6 & $7_{1}$ & 3 & $7_{4}$ & 4 & $7_{7}$ & 5 \\
    $3_{1}$ & 7 & $5_{2}$ & 6 & $6_{3}$ & 6 & $7_{2}$ & 4 & $7_{5}$ & 5 & $3_1\#4_1$ & 4 \\
    $4_{1}$ & 6 & $6_{1}$ & 6 & $3_1\#3_1$ & 4 & $7_{3}$ & 5 & $7_{6}$ & 6 & & \\
  \end{tabular}
  \label{m7fertilitytable}
\end{table}

%%%%%%%%(m,8) fertility%%%%%%%%
\begin{table}
  \caption{Maximal $m$ for $(m,8)$-fertility in knot types through 8 crossings}
    \bigskip
    
  \centering
  \begin{tabular}{c|c||c|c||c|c||c|c||c|c||c|c||c|c}
    $K$ & $m$ & $K$ & $m$ & $K$ & $m$ & $K$ & $m$ & $K$ & $m$ & $K$ & $m$ & $K$ & $m$ \\
    \hline
    $0_{1}$ & 8 & $6_{2}$ & 7 & $7_{4}$ & 6 & $8_{2}$ & 5 & $8_{8}$ & 6 & $8_{14}$ & 6 & $8_{20}$ & 6 \\
    $3_{1}$ & 8 & $6_{3}$ & 7 & $7_{5}$ & 6 & $8_{3}$ & 4 & $8_{9}$ & 5 & $8_{15}$ & 6 & $8_{21}$ & 6 \\
    $4_{1}$ & 8 & $3_1\#3_1$ & 6 & $7_{6}$ & 6 & $8_{4}$ & 5 & $8_{10}$ & 5 & $8_{16}$ & 5 & $3_1\#5_1$ & 3 \\
    $5_{1}$ & 7 & $7_{1}$ & 5 & $7_{7}$ & 6 & $8_{5}$ & 5 & $8_{11}$ & 5 & $8_{17}$ & 5 & $3_1\#5_2$ & 4 \\
    $5_{2}$ & 8 & $7_{2}$ & 6 & $3_1\#4_1$ & 4 & $8_{6}$ & 6 & $8_{12}$ & 6 & $8_{18}$ & 5 & $4_1\#4_1$ & 4 \\
    $6_{1}$ & 6 & $7_{3}$ & 7 & $8_{1}$ & 4 & $8_{7}$ & 5 & $8_{13}$ & 6 & $8_{19}$ & 5 & & \\
  \end{tabular}
  \label{m8fertilitytable}
\end{table}

%%%%%%%%(m,9) fertility%%%%%%%%
\begin{table}
  \caption{Maximal $m$ for $(m,9)$-fertility in knot types through 9 crossings}
    \bigskip
    
  \centering
  \begin{tabular}{c|c||c|c||c|c||c|c||c|c||c|c||c|c}
    $K$ & $m$ & $K$ & $m$ & $K$ & $m$ & $K$ & $m$ & $K$ & $m$ & $K$ & $m$ & $K$ & $m$ \\
    \hline
    $0_{1}$ & 9 & $7_{6}$ & 7 & $8_{12}$ & 6 & $9_{2}$ & 4 & $9_{16}$ & 5 & $9_{30}$ & 6 & $9_{44}$ & 6 \\
    $3_{1}$ & 9 & $7_{7}$ & 7 & $8_{13}$ & 7 & $9_{3}$ & 5 & $9_{17}$ & 5 & $9_{31}$ & 6 & $9_{45}$ & 6 \\
    $4_{1}$ & 8 & $3_1\#4_1$ & 6 & $8_{14}$ & 7 & $9_{4}$ & 5 & $9_{18}$ & 6 & $9_{32}$ & 6 & $9_{46}$ & 6 \\
    $5_{1}$ & 8 & $8_{1}$ & 6 & $8_{15}$ & 6 & $9_{5}$ & 4 & $9_{19}$ & 6 & $9_{33}$ & 6 & $9_{47}$ & 6 \\
    $5_{2}$ & 8 & $8_{2}$ & 7 & $8_{16}$ & 6 & $9_{6}$ & 5 & $9_{20}$ & 6 & $9_{34}$ & 6 & $9_{48}$ & 6 \\
    $6_{1}$ & 7 & $8_{3}$ & 6 & $8_{17}$ & 6 & $9_{7}$ & 6 & $9_{21}$ & 6 & $9_{35}$ & 4 & $9_{49}$ & 6 \\
    $6_{2}$ & 7 & $8_{4}$ & 7 & $8_{18}$ & 5 & $9_{8}$ & 6 & $9_{22}$ & 6 & $9_{36}$ & 6 & $3_1\#6_1$ & 4 \\
    $6_{3}$ & 8 & $8_{5}$ & 6 & $8_{19}$ & 6 & $9_{9}$ & 5 & $9_{23}$ & 6 & $9_{37}$ & 5 & $3_1\#6_2$ & 5 \\
    $3_1\#3_1$ & 6 & $8_{6}$ & 7 & $8_{20}$ & 6 & $9_{10}$ & 5 & $9_{24}$ & 6 & $9_{38}$ & 6 & $3_1\#6_3$ & 5 \\
    $7_{1}$ & 7 & $8_{7}$ & 7 & $8_{21}$ & 6 & $9_{11}$ & 6 & $9_{25}$ & 6 & $9_{39}$ & 6 & $4_1\#5_1$ & 4 \\
    $7_{2}$ & 7 & $8_{8}$ & 7 & $3_1\#5_1$ & 5 & $9_{12}$ & 6 & $9_{26}$ & 6 & $9_{40}$ & 5 & $4_1\#5_2$ & 4 \\
    $7_{3}$ & 7 & $8_{9}$ & 6 & $3_1\#5_2$ & 6 & $9_{13}$ & 6 & $9_{27}$ & 7 & $9_{41}$ & 5 & $3_1\#3_1\#3_1$ & 3 \\
    $7_{4}$ & 7 & $8_{10}$ & 6 & $4_1\#4_1$ & 4 & $9_{14}$ & 5 & $9_{28}$ & 6 & $9_{42}$ & 6 & & \\
    $7_{5}$ & 7 & $8_{11}$ & 7 & $9_{1}$ & 3 & $9_{15}$ & 6 & $9_{29}$ & 6 & $9_{43}$ & 6 & & \\
  \end{tabular}
  \label{m9fertilitytable}
\end{table}

%%%%%%%%(m,10) fertility%%%%%%%%
\begin{table}
  \caption{Maximal $m$ for $(m,10)$-fertility in knot types through 10 crossings}
    \bigskip
{\small     
  \centering
  \begin{tabular}{c|c||c|c||c|c||c|c||c|c||c|c||c|c}
    $K$ & $m$ & $K$ & $m$ & $K$ & $m$ & $K$ & $m$ & $K$ & $m$ & $K$ & $m$ & $K$ & $m$ \\
    \hline
    $0_{1}$ & 10 & $4_1\#4_1$ & 6 & $9_{40}$ & 5 & $10_{25}$ & 7 & $10_{65}$ & 6 & $10_{105}$ & 6 & $10_{145}$ & 6 \\
    $3_{1}$ & 10 & $9_{1}$ & 5 & $9_{41}$ & 6 & $10_{26}$ & 7 & $10_{66}$ & 6 & $10_{106}$ & 5 & $10_{146}$ & 7 \\
    $4_{1}$ & 10 & $9_{2}$ & 6 & $9_{42}$ & 6 & $10_{27}$ & 6 & $10_{67}$ & 6 & $10_{107}$ & 6 & $10_{147}$ & 7 \\
    $5_{1}$ & 9 & $9_{3}$ & 7 & $9_{43}$ & 6 & $10_{28}$ & 6 & $10_{68}$ & 6 & $10_{108}$ & 6 & $10_{148}$ & 6 \\
    $5_{2}$ & 10 & $9_{4}$ & 7 & $9_{44}$ & 6 & $10_{29}$ & 6 & $10_{69}$ & 7 & $10_{109}$ & 5 & $10_{149}$ & 6 \\
    $6_{1}$ & 8 & $9_{5}$ & 6 & $9_{45}$ & 6 & $10_{30}$ & 6 & $10_{70}$ & 6 & $10_{110}$ & 6 & $10_{150}$ & 6 \\
    $6_{2}$ & 9 & $9_{6}$ & 7 & $9_{46}$ & 7 & $10_{31}$ & 6 & $10_{71}$ & 6 & $10_{111}$ & 6 & $10_{151}$ & 6 \\
    $6_{3}$ & 8 & $9_{7}$ & 7 & $9_{47}$ & 6 & $10_{32}$ & 6 & $10_{72}$ & 6 & $10_{112}$ & 5 & $10_{152}$ & 5 \\
    $3_1\#3_1$ & 8 & $9_{8}$ & 7 & $9_{48}$ & 7 & $10_{33}$ & 6 & $10_{73}$ & 6 & $10_{113}$ & 6 & $10_{153}$ & 6 \\
    $7_{1}$ & 8 & $9_{9}$ & 7 & $9_{49}$ & 6 & $10_{34}$ & 6 & $10_{74}$ & 5 & $10_{114}$ & 6 & $10_{154}$ & 6 \\
    $7_{2}$ & 8 & $9_{10}$ & 7 & $3_1\#6_1$ & 6 & $10_{35}$ & 6 & $10_{75}$ & 5 & $10_{115}$ & 6 & $10_{155}$ & 5 \\
    $7_{3}$ & 8 & $9_{11}$ & 7 & $3_1\#6_2$ & 6 & $10_{36}$ & 6 & $10_{76}$ & 6 & $10_{116}$ & 5 & $10_{156}$ & 6 \\
    $7_{4}$ & 8 & $9_{12}$ & 7 & $3_1\#6_3$ & 6 & $10_{37}$ & 6 & $10_{77}$ & 6 & $10_{117}$ & 6 & $10_{157}$ & 5 \\
    $7_{5}$ & 8 & $9_{13}$ & 7 & $4_1\#5_1$ & 5 & $10_{38}$ & 6 & $10_{78}$ & 6 & $10_{118}$ & 5 & $10_{158}$ & 6 \\
    $7_{6}$ & 8 & $9_{14}$ & 7 & $4_1\#5_2$ & 6 & $10_{39}$ & 7 & $10_{79}$ & 5 & $10_{119}$ & 6 & $10_{159}$ & 5 \\
    $7_{7}$ & 8 & $9_{15}$ & 7 & $3_1\#3_1\#3_1$ & 4 & $10_{40}$ & 7 & $10_{80}$ & 6 & $10_{120}$ & 6 & $10_{160}$ & 6 \\
    $3_1\#4_1$ & 6 & $9_{16}$ & 6 & $10_{1}$ & 4 & $10_{41}$ & 7 & $10_{81}$ & 6 & $10_{121}$ & 6 & $10_{161}$ & 7 \\
    $8_{1}$ & 7 & $9_{17}$ & 7 & $10_{2}$ & 5 & $10_{42}$ & 7 & $10_{82}$ & 5 & $10_{122}$ & 6 & $10_{162}$ & 6 \\
    $8_{2}$ & 7 & $9_{18}$ & 7 & $10_{3}$ & 4 & $10_{43}$ & 6 & $10_{83}$ & 6 & $10_{123}$ & 5 & $10_{163}$ & 6 \\
    $8_{3}$ & 7 & $9_{19}$ & 7 & $10_{4}$ & 5 & $10_{44}$ & 7 & $10_{84}$ & 6 & $10_{124}$ & 5 & $10_{164}$ & 6 \\
    $8_{4}$ & 7 & $9_{20}$ & 7 & $10_{5}$ & 5 & $10_{45}$ & 7 & $10_{85}$ & 5 & $10_{125}$ & 6 & $10_{165}$ & 6 \\
    $8_{5}$ & 7 & $9_{21}$ & 7 & $10_{6}$ & 6 & $10_{46}$ & 5 & $10_{86}$ & 6 & $10_{126}$ & 6 & $3_1\#7_1$ & 3 \\
    $8_{6}$ & 7 & $9_{22}$ & 6 & $10_{7}$ & 5 & $10_{47}$ & 5 & $10_{87}$ & 6 & $10_{127}$ & 6 & $3_1\#7_2$ & 4 \\
    $8_{7}$ & 8 & $9_{23}$ & 7 & $10_{8}$ & 5 & $10_{48}$ & 5 & $10_{88}$ & 6 & $10_{128}$ & 6 & $3_1\#7_3$ & 5 \\
    $8_{8}$ & 8 & $9_{24}$ & 7 & $10_{9}$ & 5 & $10_{49}$ & 6 & $10_{89}$ & 6 & $10_{129}$ & 6 & $3_1\#7_4$ & 4 \\
    $8_{9}$ & 8 & $9_{25}$ & 6 & $10_{10}$ & 6 & $10_{50}$ & 6 & $10_{90}$ & 6 & $10_{130}$ & 6 & $3_1\#7_5$ & 5 \\
    $8_{10}$ & 7 & $9_{26}$ & 7 & $10_{11}$ & 6 & $10_{51}$ & 6 & $10_{91}$ & 5 & $10_{131}$ & 6 & $3_1\#7_6$ & 6 \\
    $8_{11}$ & 7 & $9_{27}$ & 7 & $10_{12}$ & 6 & $10_{52}$ & 6 & $10_{92}$ & 6 & $10_{132}$ & 6 & $3_1\#7_7$ & 5 \\
    $8_{12}$ & 8 & $9_{28}$ & 6 & $10_{13}$ & 6 & $10_{53}$ & 6 & $10_{93}$ & 6 & $10_{133}$ & 6 & $4_1\#6_1$ & 4 \\
    $8_{13}$ & 8 & $9_{29}$ & 6 & $10_{14}$ & 6 & $10_{54}$ & 6 & $10_{94}$ & 5 & $10_{134}$ & 6 & $4_1\#6_2$ & 5 \\
    $8_{14}$ & 8 & $9_{30}$ & 6 & $10_{15}$ & 6 & $10_{55}$ & 6 & $10_{95}$ & 6 & $10_{135}$ & 6 & $4_1\#6_3$ & 5 \\
    $8_{15}$ & 8 & $9_{31}$ & 7 & $10_{16}$ & 5 & $10_{56}$ & 6 & $10_{96}$ & 6 & $10_{136}$ & 6 & $5_1\#5_1$ & 3 \\
    $8_{16}$ & 7 & $9_{32}$ & 6 & $10_{17}$ & 5 & $10_{57}$ & 6 & $10_{97}$ & 6 & $10_{137}$ & 6 & $5_1\#5_2$ & 5 \\
    $8_{17}$ & 7 & $9_{33}$ & 6 & $10_{18}$ & 6 & $10_{58}$ & 6 & $10_{98}$ & 6 & $10_{138}$ & 6 & $5_2\#5_2$ & 4 \\
    $8_{18}$ & 7 & $9_{34}$ & 6 & $10_{19}$ & 6 & $10_{59}$ & 6 & $10_{99}$ & 5 & $10_{139}$ & 5 & $3_1\#3_1\#4_1$ & 4 \\
    $8_{19}$ & 7 & $9_{35}$ & 6 & $10_{20}$ & 6 & $10_{60}$ & 6 & $10_{100}$ & 5 & $10_{140}$ & 6 & & \\
    $8_{20}$ & 8 & $9_{36}$ & 6 & $10_{21}$ & 5 & $10_{61}$ & 5 & $10_{101}$ & 6 & $10_{141}$ & 6 & & \\
    $8_{21}$ & 8 & $9_{37}$ & 7 & $10_{22}$ & 6 & $10_{62}$ & 5 & $10_{102}$ & 6 & $10_{142}$ & 6 & & \\
    $3_1\#5_1$ & 6 & $9_{38}$ & 6 & $10_{23}$ & 7 & $10_{63}$ & 6 & $10_{103}$ & 6 & $10_{143}$ & 6 & & \\
    $3_1\#5_2$ & 6 & $9_{39}$ & 6 & $10_{24}$ & 6 & $10_{64}$ & 5 & $10_{104}$ & 5 & $10_{144}$ & 6 & & \\
  \end{tabular}
  \label{m10fertilitytable}
  }
\end{table}

\end{document}